\newtheorem{remark}{Remark}
\newtheorem{lemma}{Lemma}
\newtheorem{corollary}{Corollary}
\newtheorem{proposition}{Proposition}
\newtheorem{theorem}{Theorem}[section]
\newcommand{\N}{\mathcal{N}}
\newcommand{\M}{\mathcal{M}}
\newcommand{\LC}{\tilde{\nabla}}
\renewcommand{\j}{\ensuremath{\mathrm{J} }}
\newcommand{\K}{\ensuremath{\mathrm{K} }}
\newcommand{\G}{\tilde{g}}
\newcommand{\J}{\tilde{\rm{J}}}
\renewcommand{\H}[1][]{\ensuremath{{\mathbb{H}^{#1}} }}
\newcommand{\Wd}{\textup{W}}
\newcommand{\<}{\langle}
\renewcommand{\>}{\rangle}
\newcommand{\kn}{\varowedge}
\newcommand{\C}[1][]{\ensuremath{{\mathbb{C}^{#1}} }}
\newcommand{\R}[1][]{\ensuremath{{\mathbb{R}^{#1}} }}
\renewcommand{\S}[1][]{\ensuremath{{\mathbb{S}^{#1}} }}
\renewcommand{\epsilon}{\varepsilon}
\newcommand{\Rm}{\textup{Rm}}
\newcommand{\RR}{\textup{R}}
\newcommand{\Hol}{\textup{Hol}}
\newcommand{\ric}{\textup{Ric}}
\newcommand{\tric}{\widetilde{\ric}}
\date{}
\def\mail#1#2{{\tt #1}@{\tt #2}}
\title{A canonical structure on the tangent bundle of a pseudo- or para-K\"ahler manifold}
\author{Henri Anciaux\footnote{Universidade de S\~ao Paulo; supported by CNPq (PQ 306154/2011-0) and Fapesp (2011/21362-2), \mail{henri.anciaux}{gmail.com}}, 
Pascal Romon\footnote{Universit\'e Paris-Est Marne-la-Vall\'ee, \mail{pascal.romon}{univ-mlv.fr}}}
\begin{document}
\maketitle

\begin{abstract}
It is a classical fact that the cotangent bundle $T^* \M$ of a differentiable manifold $\M$ 
enjoys a canonical symplectic form $\Omega^*$.
If $(\M,\j,g,\omega)$ is a pseudo-K\"ahler or para-K\"ahler $2n$-dimensional manifold,
we prove that the tangent bundle $T\M$ also enjoys a natural pseudo-K\"ahler or para-K\"ahler structure 
$(\J,\G,\Omega)$, where $\Omega$ is the pull-back by $g$ of $\Omega^*$
and $\G$ is a pseudo-Riemannian metric with neutral signature $(2n,2n)$. 
We investigate the curvature properties of the pair $(\J,\G)$ and prove that: $\G$ is scalar-flat, 
is not Einstein unless $g$ is flat, has nonpositive (resp.\ nonnegative) Ricci curvature 
if and only if $g$ has nonpositive (resp.\ nonnegative) Ricci curvature as well, and is 
locally conformally flat if and only if $n=1$ and $g$ has constant curvature, or $n>2$ and $g$ is flat.
We also check that (i) the holomorphic sectional curvature of $(\J,\G)$ is not constant unless $g$ is flat, 
and (ii) in $n=1$ case, that $\G$ is never anti-self-dual, unless conformally flat.

\medskip
\emph{2010 MSC}:  32Q15, 53D05

\emph{Keywords}: tangent bundle, pseudo-K\"ahler geometry, para-K\"ahler geometry, self-duality and anti-self-duality.
\end{abstract}


\section*{Introduction}

It is a classical  fact that given any differentiable manifold $\M$, its cotangent bundle $T^*\M$ enjoys
 a canonical symplectic structure $\Omega^*$. 
 
Moreover, given  a linear connection $\nabla$ on a manifold $\M$, (e.g.\ the Levi-Civita connection  
of a Riemannian metric), the bundle $TT\M$ splits into a direct sum of two subbundles $H\M$ 
and $V \M$, both isomorphic to $T\M$. This allows to define an almost complex structure  $J$ by setting 
$J(X_h ,X_v) :=(- X_v, X_h)$, where, for $X  \in TT\M = H\M \oplus V\M$, we write 
$X \simeq (X_h,X_v) \in T\M \times T\M$.  
Analogously, one may introduce a natural almost para-complex (or bi-Lagrangian) structure, setting 
$J' (X_h ,X_v):=( X_v, X_h)$.

It is  also well known that the tangent bundle of a Riemannian manifold  $(\M,g)$ can be given 
a natural Riemannian structure, called \emph{Sasaki metric}. 
A simple way to understand this construction, which extends \emph{verbatim} to the case of a 
pseudo-Riemannian metric $g$ with signature $(p,m-p)$, is as follows: 
using the splitting $TT\M = H\M \oplus V\M$, we set:
\[
G \big( (X_h ,X_v),(Y_h ,Y_v) \big) := g(X_h,Y_h) + g(X_v,Y_v).
\]
This metric has signature $(2p,2(m-p))$ and is well behaved with respect to $J$  in two ways: 
(\textit{i}) $G$  is compatible with  $J$, i.e.\ $G(.,.)=G(J.,J.)$, and 
(\textit{ii}) the symplectic form $\Omega:=G(J.,.)$  is nothing but  the pull-back of $\Omega^*$ 
by the musical isomorphism between  $T\M \simeq_g T^*\M$. In other words, the triple $(J,G,\Omega)$ defines 
an ``almost pseudo-K\"ahler'' structure\footnote{We might also define 
an ``almost para-K\"ahler'' structure on $T\M$ by introducing the \emph{para-Sasaki metric} 
\[
G'\big( (X_h ,X_v),(Y_h ,Y_v) \big) := g(X_h,Y_h) - g(X_v,Y_v).
\]
This metric has neutral signature $(m,m)$ ($m$ being the dimension of $\M$), is compatible with $J'$ and 
verifies $\Omega:=-G'(J'.,.)$.} on $T\M$.  

Unfortunately, this construction suffers two flaws: $J$ is not integrable unless $\nabla$ is flat and 
the metric $G$ is somewhat ``rigid'': for example, if $G$ has constant scalar curvature, then $g$ is flat 
(see \cite{MT}). We refer to \cite{BV,YI} and the survey~\cite{GuKa} for more detail on the Sasaki metric.

\bigskip

Another construction can be made in the case where $\M$ is complex (resp.\ para-complex): 
in this case both $T\M$ and $T^*\M$ enjoy a canonical complex (resp.\ para-complex) structure which are 
defined as follows: given a family of holomorphic (resp.\ para-holomorphic\footnote{The terminology \emph{split-holomorphic} is sometimes used.}) 
local charts $\varphi: \M \to {\cal U} \subset \R^{2n}$ on $\M$, 
we define holomorphic (resp.\ para-holomorphic) local charts
$\bar{\varphi}: T\M \to {\cal U} \times \R^{2n}$ by 
$\bar{\varphi}(x,V)=(\varphi(x),d \varphi_x (V)), \, \, \forall (x,V)\in T\M$ 
for the tangent bundle, and
$\bar{\varphi}: T^* \M \to {\cal U} \times \R^{2n}$ by 
$\bar{\varphi}(x,\xi)=(\varphi(x),((d \varphi_x)^t)^{-1} (\xi)), \, \, \forall (x,\xi)\in T^* \M$ 
for the cotangent bundle. 
In the first section, we shall see that if $\M$ is merely almost complex (resp. almost para-complex), 
then a more subtle argument allows to define again a canonical almost complex structure  
(resp. almost para-complex structure) on $T\M$. On the other hand,  we shall prove in the second section 
that if $\M$ is pseudo- or para-K\"ahler, the corresponding structure on $T\M$ 
can also be constructed using the splitting $H\M \oplus V\M$ induced by the Levi-Civita connection 
of the K\"ahlerian metric.

Combining the canonical symplectic structure $\Omega^*$ of $T^* \M$ with the canonical complex 
(resp. para-complex) structure $\J^*$ just defined, it is natural to  introduce a $2$-tensor 
$\G^*$ by the formula 
\[
\G^* := \Omega^* (.,\J^*.).
\]
However, it turns out that $\Omega^*$ is not compatible with $\J^*$, 
since it turns out that $\Omega^*(\J^*.,\J^*.)= -\epsilon \Omega^* $ instead of the required formula 
$\Omega^*(\J^*.,\J^*.) = \epsilon \Omega^*$ 
(here and in the following, in order to deal simultaneously with the complex and para-complex cases, 
 we define $\epsilon$ to be such that $(\J^*)^2 = -\epsilon {\rm Id}$, 
i.e. $\epsilon=1$ in the complex case and $\epsilon=-1$ in the para-complex case).
It follows that the tensor ${\G}^*$ is not symmetric and therefore we failed in constructing a canonical 
pseudo-Riemannian structure on $T^*\M$.

 \bigskip

On the other hand, the same idea works well if one considers, instead of the cotangent bundle,  
the tangent bundle of a pseudo- or para-K\"ahler manifold $(\M,\j,g)$,
thus obtaining  a canonical pseudo- or para-K\"ahler structure. 
The purpose of this note is to investigate in detail this construction and
to study its curvature properties. The results are summarized in the following:

\bigskip

\noindent \textbf{Main Theorem} \em
Let $(\M,\j,g,\omega)$ be a pseudo- or para-K\"ahler manifold.
Then $T\M$ enjoys a natural
pseudo- or para-K\"ahler structure $(\J,\G,\Omega)$ with the following properties:
\begin{itemize}
	\item[---] $\J$ is the canonical complex or para-complex structure of $T\M$ induced from that of $\M$;
		\item[---]
	$\Omega$ is the pull-back of $\Omega^*$ by the metric isomorphism   $T\M \simeq_g T^*\M$; 
	\item[---] The pseudo-Riemannian metric $\G$ can be recovered from $\J$ and $\Omega$ by the equation 
	$\G(.,.) := \Omega(.,\J.)$;
	\item[---]
	According to the splitting $TT\M = H \M \oplus V\M$ induced by the Levi-Civita connection of $g$, the triple 
	$(\J,\G,\Omega)$ takes the following expression:
	\begin{eqnarray*}
	 \J (X_h,X_v)&:=&(\j X_h, \j X_v) \\
	 \G \big((X_h,X_v),(Y_h,Y_v)\big)&:=& g(  X_v, \j Y_h)-g( X_h,\j Y_v) \\
	 \Omega \big((X_h,X_v),(Y_h,Y_v)\big)&:=&  g( X_v, Y_h)-g( X_h,Y_v) ;
	 \end{eqnarray*}	
 \item[---] The pseudo-Riemannian metric $\G$ has the following properties:
 \begin{enumerate}[label=(\roman*),align=left]
 \item $\G$ has neutral signature neutral $(2n,2n)$ and is scalar flat;
 \item $(T\M,\G)$ is Einstein if and only if $(\M,g)$ is flat, 
 and therefore $(T\M, \G)$ is flat as well;
 \item the Ricci curvature $\tric$ of $\G$ has the same sign as the Ricci curvature $\ric$ of $g$;
 \item $(T\M,\G)$ is locally conformally flat if and only if $n=1$ and $g$ has constant curvature, 
 or $n > 2$ and $g$ is flat; if $n=1$, $\G$ is always self-dual, so  anti-self-duality is equivalent to 
 conformal flatness;
 \item the pair $(\J,\G)$ has constant holomorphic curvature if and only if $g$ is flat.
\end{enumerate}
\end{itemize} 
\em
\begin{remark}
We use in $(iv)$ the general property that four-dimensional neutral pseudo-K\"ahler or para-K\"ahler manifolds are self-dual if and only if their scalar curvature vanishes. This is analogous to the case of K\"ahler four-dimensional manifolds, except that self-duality is exchanged with anti-self-duality. A proof of this statement is given in Theorem~\ref{thm-W-scal2} in the appendix.
\end{remark}

\bigskip 
 
\noindent This result is a generalization of previous work on the tangent bundle of a Riemannian surface 
(see \cite{GK1}, \cite{GK2}, \cite{AGR}). The authors wish to thank Brendan Guilfoyle for his valuable 
suggestions and comments.

\section{Almost complex and para-complex structures on the tangent bundle} 

Given a manifold $\M$ endowed with an almost complex or almost para-complex
structure $\j$, it is only natural to ask whether its tangent or cotangent
bundle inherit such a structure. The answer is positive:

\begin{proposition}
  Let $(\M,\j)$ be an almost complex (resp. para-complex) manifold. Then its
  tangent bundle  admits a canonical almost complex
  (resp.\ para-complex) structure $\J$. Furthermore, if $\j$ is complex (resp.\ para-complex),  so is~$\J$.
\label{almost_structure}
\end{proposition}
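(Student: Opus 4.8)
The plan is to define $\J$ as the \emph{complete lift} of $\j$ to $T\M$, a classical construction (see \cite{YI}) that requires no auxiliary connection or metric and is therefore canonical. Recall that every vector field $X$ on $\M$ has a vertical lift $X^V$ and a complete lift $X^C$, which in natural bundle coordinates $(x^i,y^i)$ on $T\M$ read $X^V=X^i\,\partial_{y^i}$ and $X^C=X^i\,\partial_{x^i}+y^j(\partial_{x^j}X^i)\,\partial_{y^i}$. For the $(1,1)$-tensor $\j=\j^i_j\,\partial_{x^i}\otimes dx^j$ I would let $\J$ be the unique field of endomorphisms of $TT\M$ (well-defined, see \cite{YI}) characterized by $\J\,X^V:=(\j X)^V$ and $\J\,X^C:=(\j X)^C$, whose coordinate expression is
\[
\J=\j^i_j\big(\partial_{x^i}\otimes dx^j+\partial_{y^i}\otimes dy^j\big)+y^k(\partial_{x^k}\j^i_j)\,\partial_{y^i}\otimes dx^j .
\]
The first term is the ``diagonal'' operator acting as $\j$ on each factor; the last term carries the derivatives of $\j$ and vanishes precisely when $\j$ has constant coefficients, in particular in the holomorphic (resp.\ para-holomorphic) charts of the Introduction.

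Next I would record the two structural identities of the complete lift. It is additive and $\R$-linear, it fixes the identity, $(\mathrm{Id})^C=\mathrm{Id}$, and it is multiplicative, $(ST)^C=S^C T^C$, the last being immediate from $S^C T^C X^V=S^C(TX)^V=(STX)^V$ and the analogous computation on $X^C$. Applying multiplicativity to $\j^2=-\epsilon\,\mathrm{Id}$ yields
\[
\J^2=(\j^2)^C=(-\epsilon\,\mathrm{Id})^C=-\epsilon\,\mathrm{Id},
\]
so $\J$ is almost complex when $\epsilon=1$ and squares to $+\mathrm{Id}$ when $\epsilon=-1$. In the para-complex case I would additionally check the eigenbundle ranks: writing $T\M=T^+\oplus T^-$ for the rank-$n$ eigenbundles of $\j$, the vertical and complete lifts of sections of $T^\pm$ are $\pm1$-eigensections of $\J$, so each eigenbundle of $\J$ has rank $2n$, as required for an almost para-complex structure.

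For integrability I would invoke the compatibility of the complete lift with the Lie bracket, namely $[X^C,Y^C]=[X,Y]^C$, $[X^C,Y^V]=[X,Y]^V$, $[X^V,Y^V]=0$, which propagates to the Nijenhuis operation and gives $N_\J=(N_\j)^C$. Hence if $\j$ is integrable then $N_\j=0$ forces $N_\J=0$: in the complex case integrability of $\J$ follows from the Newlander--Nirenberg theorem, while in the para-complex case the vanishing of $N_\J$ makes the two eigenbundles involutive, and Frobenius' theorem produces the para-holomorphic atlas directly. More concretely, in the integrable case one can bypass the Nijenhuis computation altogether by exhibiting the atlas: a chart $\varphi$ on $\M$ induces $\bar\varphi(x,V)=(\varphi(x),d\varphi_x(V))$ on $T\M$ as in the Introduction, whose transition maps are the tangent maps of holomorphic (resp.\ para-holomorphic) transitions and hence are themselves holomorphic (resp.\ para-holomorphic); one then checks that $\J$ is multiplication by $i$ (resp.\ by the para-complex unit) in these charts.

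The main obstacle is the Nijenhuis identity $N_\J=(N_\j)^C$: proving it cleanly requires either the bracket identities above together with the fact that a $(1,1)$-tensor on $T\M$ is determined by its action on vertical and complete lifts, or an explicit computation in natural coordinates using the formula for $\J$ displayed above. Once this identity is secured both cases are immediate, the complex one modulo the deep input of Newlander--Nirenberg and the para-complex one via the elementary Frobenius argument.
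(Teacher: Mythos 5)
Your proof is correct, and it takes a genuinely different route from the paper's. You define $\J$ as the complete lift $\j^C$ in the sense of Yano--Ishihara \cite{YI}; the coordinate expression you write down is in fact the very same tensor the paper constructs (the paper's vertical component $\j\,\Xi + D_\xi\j\, X$ is exactly your $\j^i_j\,dy^j + y^k(\partial_{x^k}\j^i_j)\,dx^j$ term), but the paper establishes well-definedness by a bare-hands change-of-variables computation, verifies $\J^2=-\epsilon\,\mathrm{Id}$ by differentiating $\j^2=-\epsilon\,\mathrm{Id}$, and proves integrability by observing that in holomorphic (resp.\ para-holomorphic) charts $\j$ is constant, hence so is $\J$ in the induced charts. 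Your approach instead gets well-definedness and the algebraic identities from the lift calculus (determination of tensors by their action on vertical and complete lifts, multiplicativity $(ST)^C=S^CT^C$), and integrability from the identity $N_\J=(N_\j)^C$. What each buys: the paper's argument is entirely self-contained and elementary, and in particular never needs Newlander--Nirenberg, since the atlas for $T\M$ is exhibited directly; your functorial argument outsources the computations to classical results in \cite{YI}, is arguably more conceptual, and yields the stronger pointwise statement $N_\J=(N_\j)^C$ --- though, as you note, converting $N_\J=0$ back into an atlas costs Newlander--Nirenberg in the complex case (your ``bypass'' via the induced atlas is precisely the paper's own final step, so the two proofs coincide there). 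One genuine improvement on your side: you verify that the two eigenbundles of $\J$ each have rank $2n$ in the para-complex case, a condition required by the definition of an almost para-complex structure that the paper's proof leaves implicit.
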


\begin{remark}
  Such a result has been proven already by Lempert \& Sz\"oke \cite{LS} for the
  tangent bundle in the almost complex case. Their construction uses the jets over $\M$ and is quite a bit more
  technical than our proof. However it gives an  interesting interpretation of the meaning of $\J$. 
  We shall see below in Proposition \ref{para} a different and simpler way of defining and
  understanding $\J$, provided $\M$ is a pseudo- or para-K\"ahler
  manifold.
\end{remark}

\begin{proof}
We prove the result using coordinate charts, which amounts to showing that
$\J$ can be defined independently of any change of variable. Let $y = \varphi (x)$ 
be a local change of coordinates on $\mathbb{R}^n$ and write $\xi$ and $\eta$ respectively for the tangent
coordinates induced by the charts (i.e. $\sum_i \xi^i \partial / \partial
x^i = \sum_i \eta^i \partial / \partial y^i$). The change of tangent coordinates at $x$ 
is $\xi \mapsto \eta = d \varphi (x) \xi$, in other words $\varphi$ induces a chart $\Phi$ 
on $\mathbb{R}^{2 n}$, $\Phi : (x, \xi) \mapsto (\varphi (x), d\varphi (x) \xi)$. 
The tangent coordinates at $(x,\xi)$ (resp.\ $(y, \eta)$) are denoted by $(X, \Xi)$ (resp.\ $(Y, H)$) 
and the change of (doubly) tangent coordinates is
\[ 
d \Phi (x, \xi) : (X, \Xi) \mapsto (Y, H) = (d \varphi (x) X, d^2 \varphi(x) (X, \xi) + d \varphi (x) \Xi) .
\]
Assume moreover that we have a $(1, 1)$ tensor, which reads in the $x$ coordinate as the matrix $\j(x)$ 
and in the $y$ coordinate as the matrix $\j' (y) = \j' (\varphi (x)) = d \varphi (x) \circ \j (x) \circ
(d \varphi (x))^{- 1}$. Equivalently for any $X$ and $Y = d \varphi (x) X$, we have 
$\j' (y) Y = \j'(\varphi (x)) d \varphi (x) X = d \varphi (x) \j (x) X$. 
Differentiating this equality along $\xi$ yields
\begin{multline}  \label{Dj}
  (D_{d \varphi (x) \xi} \j') (\varphi(x)) d \varphi (x) X + 
  \j' (\varphi(x)) d^2 \varphi (x) (X, \xi) 
  \\
  = d\varphi (x)  (D_{\xi} \j) (x) X +   d^2 \varphi (x) (\j (x) X, \xi), 
\end{multline}
where $(D_{\xi} \j) (x)$ denotes in this proof the directional derivative of the matrix $\j$ at $x$ 
in the direction $\xi$ (not a covariant derivative).
  
We now define the $(1, 1)$ tensor $\J$ in the $(x,\xi)$ coordinate by
\[ 
\J (x, \xi) : (X, \Xi) \mapsto (\j (x) X, \j (x) \Xi + D_{\xi} \j (x) X) .
\]
Let us prove that this definition is coordinate-independent (for greater
readability we will often write $\j, \j'$ for $\j (x), \j'(y)$). 
Using (\ref{Dj}) and the symmetry of the second order differential $d^2 \varphi (x)$,
\begin{eqnarray*}
  d \Phi (x, \xi)  (\j (X, \Xi)) 
  & = & d \Phi (x, \xi) (\j X, \j \Xi + D_{\xi} \j (x)
  X)
  \\
  & = & (d \varphi (x) \j X, d^2 \varphi (x)
  (\j X, \xi) + d \varphi (x)  (\j \Xi + D_{\xi} \j (x) X))
  \\
  & = & (\j' Y,\j' d \varphi (x) \Xi 
  \\
  && + (D_{d \varphi (x) \xi} \j') (\varphi (x)) d \varphi (x) X + \j' d^2 \varphi (x)(X, \xi))
  \\
  & = & (\j' Y, \j'  (d \varphi (x) \Xi + d^2 \varphi (x)  (X, \xi))
  \\
  && +(D_{d\varphi (x) \xi} \j') (\varphi (x)) d \varphi (x) X)
  \\
  & = & (\j' Y, \j' H + D_{\eta} \j' (y) Y) = \J' (y, \eta)  (Y, H) ,
\end{eqnarray*}
where $\J'$ denotes the map corresponding to $\J$ in the $(y,\eta)$ coordinates. 
Consequently the tensor on $\M$ extends naturally to $T \M$.

\bigskip
  
We have so far defined a $(1,1)$ tensor on $T\M$  without extra assumptions. 
Suppose now that $\j$ is an almost complex (resp. para-complex) structure, so that
$\j^2 = -\varepsilon \text{Id}$. Differentiating this property yields
$\j \,D_{\xi} \j + (D_{\xi} \j) \, \j = 0$. Then
\begin{multline*}
   \J^2  (X, \Xi) = (\j^2 X, \j (\j \Xi + D_{\xi} \j X) + D_{\xi} \j (\j X)) 
   \\
   = (-\varepsilon X, -\varepsilon \Xi + \j (d \j \xi) X + (d \j \xi)  (\j X) 
   = -\varepsilon (X, \Xi)
\end{multline*}
so that $\J$ is also an almost complex (resp.\ para-complex) structure.

Finally if $\j$ is a complex (resp.\ para-complex) structure then we can use complex (resp.\ para-complex) 
coordinate charts, which amounts to saying that $\j$ is a constant matrix. Then $\J$ defined in the associated 
charts on $T \M$ takes a simpler expression, and is also constant:
\[ 
\J (x, \xi) : (X, \Xi) \mapsto (\j X, \j \Xi) 
\]
and that characterizes a complex (resp.\ para-complex) structure.
\end{proof}

\begin{remark}
Finding a similar almost-complex structure on $T^*\M$ is much more difficult, and may not be true 
in all generality. The Reader will note that, whenever $\M$ is endowed with a pseudo-Riemannian metric, 
we have a musical correspondence between $T\M$ and $T^*\M$, and $\J$ induces a corresponding 
structure $\J^*$ on $T^*\M$. However different metrics will yield different structures on $T^*\M$. 
There is one unambiguous case, which will be the setting in the remainder of this article, 
namely when $\j$ is integrable.
\end{remark}

\section{The K\"ahler structure} 

Let $\M$ be a differentiable manifold. We denote by $\pi$ and $\pi^*$ the canonical projections
 $ T \M \to \M$ and $ T \M^* \to \M$. 
 The  subbundle $\ker (d\pi):=V\M$ of $TT \M$
(it is thus a bundle over $T\M$) will be called \em the
vertical bundle. \em

Assume now that $\M$ is equipped with a linear connection $\nabla$.
The corresponding horizontal bundle is defined as follows:
 let $\bar{X}$ be a tangent vector to $T\M$ at some point
$(x_0,V_0)$. This implies that there exists a curve $\gamma(s)=
(x(s),V(s))$ such that $(x(0),V(0))=(x_0,V_0)$ and $\gamma'(0) = \bar{X}$. If
$X \notin V \M$ (which implies $x'(0) \neq 0$), we define the
connection map (see \cite{Do}, \cite{AGR}) $\K : TT \M \to T\M$ by $\K \bar{X} =
\nabla_{x'(0)}V(0)$, where $\nabla$ denotes the Levi-Civita connection of the metric $g$.
If $X$ is vertical, we may assume that the
curve $\gamma$ stays in a fiber so that $V(s)$ is a curve in a
vector space. We then define $\K \bar{X}$ to be simply $V'(0)$.  The
horizontal bundle is then $Ker (\K)$ and we have a direct sum
\begin{equation}
\begin{array}{rcc} TT\M= H\M \oplus V\M & \simeq & T\M \oplus T\M
 \\ \bar{X}& \simeq &  (\Pi \bar{X}, \K \bar{X} ).
  \end{array} \label{eq:directsum}
\end{equation}
Here and in the following, $\Pi$ is a shorthand notation for $d\pi$. 

\begin{lemma} \emph{\cite{Do}} \label{bracket}
Given a vector field ${X}$ on $(\M,\nabla)$ there exists exactly one vector field $X^h$ and one vector field
$X^v$ on $T\M$
such that $(\Pi X^h,\K X^h)=(X,0)$ and $(\Pi X^v,\K X^v)=(0,X)$.
Moreover, given two vector fields $X$ and $Y$ on $(\M,\nabla)$, we have, at the point $(x,V)$:
\begin{eqnarray*} 
\ [X^v,Y^v] & = &  0,      \\
\ [X^h,Y^v] & = & (\nabla_X Y)^v \simeq (0,\nabla_X Y ),\\
\ [X^h,Y^h] &  \simeq & ([X,Y], -\text{\emph{R}}(X,Y)V),
\end{eqnarray*}
where $\text{\emph{R}}$ denotes the curvature of $\nabla$ and we use the direct sum notation $(\ref{eq:directsum})$.
\end{lemma}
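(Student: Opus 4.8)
The plan is to work in local coordinates $(x^i)$ on $\M$ with the induced fibre coordinates $(\xi^j)$ on $T\M$, so that a point of $T\M$ is $(x,V)$ with $V = \sum_j \xi^j\,\partial/\partial x^j$. The first assertion of the lemma is immediate: since the pair $(\Pi,\K)$ realizes the isomorphism $TT\M \simeq T\M \oplus T\M$ of \eqref{eq:directsum} fibrewise and smoothly, prescribing the values $(X,0)$ and $(0,X)$ determines unique smooth vector fields $X^h$ and $X^v$. Everything then reduces to computing the three Lie brackets, for which I first need explicit coordinate expressions for the lifts.

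First I would write the connection map $\K$ in coordinates. Evaluating $\K$ on a curve $\gamma(s)=(x(s),V(s))$ and reading off $\nabla_{x'(0)}V(0)$ gives, for a tangent vector $\bar X = a^i\,\partial_{x^i} + b^j\,\partial_{\xi^j}$ at $(x,\xi)$, the formula $\K\bar X = (b^k + \Gamma^k_{ij}\,a^i\,\xi^j)\,\partial_{x^k}$, where $\Gamma^k_{ij}$ are the Christoffel symbols of $\nabla$. Imposing $\Pi X^h = X$, $\K X^h = 0$ and $\Pi X^v = 0$, $\K X^v = X$ then yields
\[
X^h = X^i\,\partial_{x^i} - \Gamma^k_{ij}\,X^i\,\xi^j\,\partial_{\xi^k},
\qquad
X^v = X^i\,\partial_{\xi^i}.
\]

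With these formulas the first two brackets are short direct computations. Since the components $X^i$ depend only on the base point, $[X^v,Y^v]=0$ is immediate. For $[X^h,Y^v]$ the vertical field $Y^v$ differentiates only the $\xi$-dependent Christoffel term of $X^h$, and the surviving terms assemble precisely into the components of $\nabla_X Y$, giving $[X^h,Y^v] = (\nabla_X Y)^v$. The projection of each of these is correct as well: $Y^v$ is $\pi$-related to $0$, so both brackets are purely vertical.

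The main obstacle is the last bracket $[X^h,Y^h]$. Its horizontal component is painless, because $X^h$ and $Y^h$ are $\pi$-related to $X$ and $Y$, so that $\Pi[X^h,Y^h] = [X,Y]$ by naturality of brackets under $\pi$-related fields. The content lies in the vertical component $\K[X^h,Y^h]$: expanding the bracket produces, in the $\partial_{\xi^k}$ slot, two kinds of terms — those involving $\partial_i\Gamma^k_{mn}$ from differentiating the Christoffel symbols, and those involving products $\Gamma^l_{pq}\Gamma^k_{ml}$ from the cross terms in which one horizontal lift differentiates the vertical part of the other. The crux is to verify that, after using the symmetry $\Gamma^k_{ij}=\Gamma^k_{ji}$ of the torsion-free Levi-Civita connection, these combine exactly into the coordinate expression of the Riemann tensor, so that $\K[X^h,Y^h] = -R(X,Y)V$. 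This reflects the geometric fact that curvature is the obstruction to integrability of the horizontal distribution. Collecting the horizontal and vertical parts gives $[X^h,Y^h] \simeq ([X,Y], -R(X,Y)V)$, which completes the proof.
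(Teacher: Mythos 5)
The paper does not prove this lemma at all---it is quoted directly from Dombrowski \cite{Do}---so the only comparison available is with the classical coordinate computation, which is precisely the route you take. Your setup is correct: the coordinate formula $\K\bar X=(b^k+\Gamma^k_{ij}a^i\xi^j)\,\partial_{x^k}$, the resulting lifts $X^h=X^i\partial_{x^i}-\Gamma^k_{ij}X^i\xi^j\partial_{\xi^k}$ and $X^v=X^i\partial_{\xi^i}$, the computations of $[X^v,Y^v]$ and $[X^h,Y^v]$, and the observation that $\Pi[X^h,Y^h]=[X,Y]$ by $\pi$-relatedness are all sound.

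The gap is at the step you yourself call the crux, which you describe rather than carry out---and the description is inaccurate in a way that matters. Expanding $[X^h,Y^h]$, the $\partial_{\xi^l}$-slot contains, besides the $\partial\Gamma$-terms and the $\Gamma\Gamma$-terms you list, also the terms
\[
-\Gamma^l_{mn}(X^i\partial_iY^m)\xi^n+\Gamma^l_{mn}(Y^i\partial_iX^m)\xi^n
=-\Gamma^l_{mn}[X,Y]^m\xi^n ,
\]
which are of neither kind and do not vanish. They disappear only because the vertical component in the splitting $(\ref{eq:directsum})$ is $\K[X^h,Y^h]$ and not the naive $\partial_\xi$-projection: since the horizontal slot of the bracket is $[X,Y]\neq 0$, applying $\K$ adds exactly $+\Gamma^l_{mn}[X,Y]^m\xi^n$, cancelling the terms above, and only then do the remaining $\partial\Gamma$ and $\Gamma\Gamma$ terms sum to $-(R(X,Y)V)^l$. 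Taken literally, your deferred verification (``the two kinds of terms combine into the Riemann tensor'') would fail, leaving the non-tensorial remainder $-\Gamma^l_{mn}[X,Y]^m\xi^n$. The fix is either to do the full computation including this $\K$-correction, or (since $\nabla$ is torsion-free here) to evaluate at the centre of normal coordinates, where $\Gamma(x)=0$ kills the $\Gamma\Gamma$-terms, the extra terms, and the discrepancy between $\K$ and the fibre projection, giving directly $\K[X^h,Y^h]^l=-X^iY^m\xi^n(\partial_i\Gamma^l_{mn}-\partial_m\Gamma^l_{in})=-(R(X,Y)V)^l$. A minor further remark: the symmetry $\Gamma^k_{ij}=\Gamma^k_{ji}$ you invoke is not what makes the terms combine; once the $\K$-correction is accounted for, the three bracket formulas hold for an arbitrary linear connection, which is the generality in which the lemma is stated.
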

The Reader should not confuse the horizontal \emph{lift} $X^h$, which is a vector field on $T\M$ 
constructed from a vector field $X \in \mathfrak{X}(\M)$, with the notation $\bar{X}_h = \Pi \bar{X}$ 
denoting the horizontal \emph{part} of $\bar{X} \in \mathfrak{X}(T\M)$. Similarly, the vertical lift $X^v$ 
is \emph{not} the vertical projection $\bar{X}_v = K \bar{X}$. 

We say that a vector field $\bar{X}$ on $T\M$ is \emph{projectable} if it is constant on the fibres, i.e.\
$(\Pi \bar{X},\K \bar{X})(x,V)=(\Pi \bar{X},\K \bar{X})(x,V')$. According to the
lemma above, it is equivalent to the fact that there exists two vector fields
$X_1$ and $X_2$ on $\M$ such that $\bar{X}=(X_1)^h + (X_2)^v$.

\bigskip

Assume now that $\M$ is equipped with
a pseudo-Riemannian metric $g$, i.e.\ a non-degenerate bilinear form.
By the non-degeneracy assumption, we
 can identify $T^*\M$ with $T\M$ by the following (musical) isomorphism:
 \[ \begin{array}{lccc} \iota :
 & T\M
 &\to& T \M^* \\
& (x,V)& \mapsto &  (x,\xi),
  \end{array}\]
 where $\xi$ is defined by
 \[\xi(W)=g(V,W), \quad \forall \,  W \in T_x\M. \]
The \emph{Liouville form} $\alpha \in  \Omega^1(T^*\M)$ is the $1$-form defined
 by $\alpha_{(x,\xi)}(\bar{X})= \xi_x (d\pi^* (\bar{X}))$, where $\bar{X}$ is a tangent vector at the point 
 $(x,\xi)$ of $T^*\M$. The canonical symplectic form on $T\M^*$ is defined to be $\Omega^*:= - d\alpha$.  
There is an elegant,  explicit formula for the symplectic form $\Omega:=\iota^*(\Omega^*)$ in terms of 
the metric $g$ and the splitting induced by the Levi-Civita connection $\nabla$  (see \cite{An}, \cite{La}):
 \begin{lemma} \label{symplectic}
Let $\bar{X}$ and $\bar{Y}$ be two tangent vectors to
$T \M$; we have
 \[ \Omega(\bar{X},\bar{Y}) = g(\K \bar{X}, \Pi \bar{Y})-g(\Pi \bar{X},\K \bar{Y}).\]
\end{lemma}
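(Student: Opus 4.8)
The plan is to compute the pull-back $\Omega = \iota^*\Omega^* = -\,d(\iota^*\alpha)$ directly, testing it against the frame of horizontal and vertical lifts. Since both sides of the claimed identity are manifestly tensorial (that is, $\R$-bilinear and pointwise in $\bar X,\bar Y$), it suffices to verify the equality when $\bar X$ and $\bar Y$ run over the lifts $X^h, X^v$ of vector fields $X$ on $\M$, which at any point span $T_{(x,V)}(T\M)$ by \eqref{eq:directsum}; these lifts are genuine vector fields on $T\M$, so I may apply Cartan's formula for the exterior derivative of a $1$-form.

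First I would identify the pulled-back Liouville form $\tilde\alpha := \iota^*\alpha$ intrinsically. Because $\iota$ covers the identity on $\M$, one has $\pi^*\circ\iota = \pi$, hence $d\pi^*\circ d\iota = \Pi$, and therefore
\[
\tilde\alpha_{(x,V)}(\bar X) = \alpha_{\iota(x,V)}(d\iota\,\bar X) = \xi_x\big(d\pi^*(d\iota\,\bar X)\big) = g_x(V,\Pi\bar X),
\]
where $\xi = g(V,\cdot)$. In particular $\tilde\alpha(X^h)(x,V) = g(V,X)$ and $\tilde\alpha(X^v) = 0$.

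Next I would evaluate $\Omega(\bar X,\bar Y) = -d\tilde\alpha(\bar X,\bar Y) = -\bar X(\tilde\alpha\bar Y) + \bar Y(\tilde\alpha\bar X) + \tilde\alpha([\bar X,\bar Y])$ in the three cases. For $\bar X = X^v,\ \bar Y = Y^v$ every term vanishes, using $\tilde\alpha(X^v)=0$ and $[X^v,Y^v]=0$ from Lemma~\ref{bracket}. For $\bar X = X^h,\ \bar Y = Y^v$ the surviving contribution comes from differentiating $g(V,X)$ along the fibre direction $Y$, giving $g(X,Y)$, while $\tilde\alpha([X^h,Y^v]) = \tilde\alpha\big((\nabla_X Y)^v\big)=0$; this reproduces, up to the overall sign fixed by the convention $\Omega^* = -d\alpha$, the off-diagonal term $\pm g(X,Y)$ of the asserted formula. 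The main point is the horizontal--horizontal case $\bar X = X^h,\ \bar Y = Y^h$: here metric compatibility gives $X^h(\tilde\alpha Y^h) = g(V,\nabla_X Y)$ and likewise with $X,Y$ exchanged, while $\tilde\alpha([X^h,Y^h]) = g(V,[X,Y])$ since the curvature term $-\RR(X,Y)V$ sits in the vertical part annihilated by $\tilde\alpha$. Thus $d\tilde\alpha(X^h,Y^h) = g(V,\nabla_X Y - \nabla_Y X - [X,Y]) = 0$ by torsion-freeness, matching $g(\K X^h,\Pi Y^h) - g(\Pi X^h,\K Y^h)=0$.

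I expect the only real subtleties to be bookkeeping ones: keeping the curvature term correctly inside the vertical component, so that it is killed by $\tilde\alpha$, and fixing the global sign coming from the conventions $\Omega^*=-d\alpha$ together with the chosen normalizations of $d$ and $\wedge$. As a cross-check I would redo the computation in local coordinates $(x^i,v^i)$, where $\tilde\alpha = g_{ij}v^j\,dx^i$ and the horizontal lift of $\partial_{x^i}$ is $\partial_{x^i}-\Gamma^k_{il}v^l\partial_{v^k}$; in the horizontal--horizontal pairing the $\Gamma$-terms cancel against the $\partial_k g_{ij}$ terms precisely by the Christoffel formula, leaving the clean expression of Lemma~\ref{symplectic}.
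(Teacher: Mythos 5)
The paper does not actually prove Lemma~\ref{symplectic}; it quotes the formula from \cite{An} and \cite{La}. Your argument is therefore the natural self-contained substitute, and its structure is correct: both sides are tensorial, so it is legitimate to test them on horizontal and vertical lifts; the pulled-back Liouville form is indeed $(\iota^*\alpha)_{(x,V)}(\bar X)=g_x(V,\Pi\bar X)$ because $\pi^*\circ\iota=\pi$; and the three cases then follow from Cartan's formula together with Lemma~\ref{bracket}. You also isolate the two genuinely geometric inputs correctly: the curvature term of $[X^h,Y^h]$ sits in the vertical component, hence is invisible to $\iota^*\alpha$, and the horizontal--horizontal term dies by torsion-freeness and metric compatibility.

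The one thing you cannot do is leave the mixed term as ``$\pm g(X,Y)$'': the lemma asserts an identity with a definite sign, so a proof must pin it down. Your own computation, performed with the paper's convention $\Omega^*=-d\alpha$ and the standard convention $d\beta(\bar X,\bar Y)=\bar X\beta(\bar Y)-\bar Y\beta(\bar X)-\beta([\bar X,\bar Y])$, gives
\[
\Omega(X^h,Y^v)=-\,d(\iota^*\alpha)(X^h,Y^v)=+\,g(X,Y),
\]
that is $\Omega(\bar X,\bar Y)=g(\Pi\bar X,\K\bar Y)-g(\K\bar X,\Pi\bar Y)$, the \emph{negative} of the formula stated in Lemma~\ref{symplectic}. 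This is not a slip in your reasoning: the stated formula corresponds to the opposite convention $\Omega^*=+d\alpha$, and the paper itself is not consistent on this point, since in Section~\ref{exPseudo} the flat-space form is computed to be $\Omega=dq_1\wedge dp_1+dq_2\wedge dp_2$, which on lifts yields $\Omega(X^h,Y^v)=+g(X,Y)$, i.e.\ your sign rather than the lemma's. So finish the proof by stating explicitly which convention produces which sign (with $\Omega^*=-d\alpha$ the right-hand side of the lemma must be negated; with $\Omega^*=d\alpha$ the lemma holds as written), rather than deferring the sign to ``bookkeeping''. With that single clarification your argument, including the coordinate cross-check, is complete and correct.
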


\begin{proposition} \label{para} 
Let $(\M,\j,g)$ be a pseudo- or para-K\"ahler manifold. The canonical structure $\J$ satisfies
\[\J \bar{X} \simeq \J (\Pi \bar{X} ,\K \bar{X})= (\j \Pi \bar{X}, \j \K \bar{X}).
\]
\end{proposition}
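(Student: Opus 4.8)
The aim is to reconcile the two descriptions of $\J$: the coordinate definition from Proposition~\ref{almost_structure}, and the intrinsic formula written in the splitting $(\ref{eq:directsum})$ determined by the Levi-Civita connection $\nabla$ of $g$. Since $(\M,\j,g)$ is pseudo- or para-K\"ahler, the feature I would exploit is that $\j$ is parallel, $\nabla\j=0$. The plan is to fix a local chart $x$ on $\M$, with induced tangent coordinate $\xi$ (so that a point of $T\M$ reads $(x,\xi)$ and a tangent vector $\bar X$ to $T\M$ reads $(X,\Xi)$), and to compute both sides of the claimed equality in these coordinates.

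First I would recall from the proof of Proposition~\ref{almost_structure} that, in the $(x,\xi)$ coordinates,
\[
\J(x,\xi):(X,\Xi)\mapsto\bigl(\j X,\ \j\,\Xi+(D_\xi\j)\,X\bigr),
\]
where $D_\xi\j$ is the ordinary directional derivative of the matrix $\j$ at $x$ in the direction $\xi$. Next I would write the projection $\Pi$ and the connection map $\K$ in the same coordinates: one has $\Pi\bar X=X$, while $\K\bar X=\Xi+\Gamma(X,\xi)$, where $\Gamma$ denotes the Christoffel symbols of $\nabla$ viewed as the bilinear map $(U,W)\mapsto\Gamma(U,W)$. This last formula is just the coordinate expression $\nabla_{x'(0)}V(0)$ of the connection map recalled before Lemma~\ref{bracket}. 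Applying these, the claimed identity $\J\bar X\simeq(\j\,\Pi\bar X,\ \j\,\K\bar X)$ splits into its $\Pi$-component, the trivial equality $\j X=\j X$, and its $\K$-component, which after substituting the expression for $\K$ reduces to the pointwise matrix identity
\[
(D_\xi\j)\,X+\Gamma(\j X,\xi)=\j\,\Gamma(X,\xi).
\]

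To finish, I would derive this last identity from the K\"ahler condition. Writing $\nabla\j=0$ in coordinates for the $(1,1)$-tensor $\j$ gives $D_W\j=\j\,\Gamma_W-\Gamma_W\,\j$ for every direction $W$, where $\Gamma_W$ is the matrix $U\mapsto\Gamma(W,U)$. Taking $W=\xi$ and evaluating on $X$ yields $(D_\xi\j)X=\j\,\Gamma(\xi,X)-\Gamma(\xi,\j X)$; the symmetry of the Christoffel symbols coming from torsion-freeness of the Levi-Civita connection, namely $\Gamma(\xi,X)=\Gamma(X,\xi)$ and $\Gamma(\xi,\j X)=\Gamma(\j X,\xi)$, then gives exactly the required identity. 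The only real subtlety, and hence the step I would treat most carefully, is the coordinate bookkeeping: getting the connection map $\K$ right in coordinates and recognizing that the vertical term $(D_\xi\j)X$ manufactured in Proposition~\ref{almost_structure} purely from coordinate changes is precisely what the parallelism of $\j$ demands, so that the two a priori unrelated constructions of $\J$ coincide.
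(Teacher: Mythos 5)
Your proof is correct and follows essentially the same route as the paper: both express $\Pi$, $\K$ and the coordinate formula for $\J$ from Proposition~\ref{almost_structure} in a chart, and then reconcile the two sides using $\nabla\j=0$ together with the torsion-free symmetry of the connection coefficients. The only difference is cosmetic: the paper first invokes integrability to work in holomorphic coordinates, where $\j$ is constant and $D\j$ vanishes (so the K\"ahler condition reads ``$\mu(X)$ commutes with $\j$''), whereas you keep an arbitrary chart and absorb the $D_\xi\j$ term via the coordinate expression of $\nabla\j=0$ --- the same computation in a slightly different order.
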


\begin{corollary} \label{metric} Let $(\M,\j,g)$ be a pseudo- or para-K\"ahler manifold. 
The $2$-tensor $\G(.,.) := \Omega(.,\J.)$ satisfies
\[	 \G ( \bar{X},\bar{Y})= g(  \K \bar{X}, \j \Pi \bar{Y})-g( \Pi \bar{X},\j \K \bar{Y}).\]
 Moreover, $\G$ is symmetric and therefore defines a pseudo-Riemannian metric on $T\M$.
\end{corollary}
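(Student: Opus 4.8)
The plan is to obtain the explicit formula by direct substitution, and then to derive symmetry and non-degeneracy from the K\"ahler compatibility of $g$ with $\j$ together with the fact that $\Omega$ is symplectic.

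First I would compute $\G(\bar{X},\bar{Y}) = \Omega(\bar{X},\J\bar{Y})$ by feeding the expression for $\J$ from Proposition~\ref{para} into the formula for $\Omega$ from Lemma~\ref{symplectic}. Since $\Pi\J\bar{Y} = \j\Pi\bar{Y}$ and $\K\J\bar{Y} = \j\K\bar{Y}$, Lemma~\ref{symplectic} yields at once
\[
\G(\bar{X},\bar{Y}) = \Omega(\bar{X},\J\bar{Y}) = g(\K\bar{X},\Pi\J\bar{Y}) - g(\Pi\bar{X},\K\J\bar{Y}) = g(\K\bar{X},\j\Pi\bar{Y}) - g(\Pi\bar{X},\j\K\bar{Y}),
\]
which is the claimed formula. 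This step is purely formal and involves no real computation.

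Next I would prove symmetry. The key ingredient is the defining property of the fundamental form $\omega = g(\j\,\cdot,\cdot)$: being a $2$-form it is antisymmetric, which is equivalent to $\j$ being $g$-skew-symmetric, i.e.\ $g(\j A,B) = -g(A,\j B)$ for all $A,B$. This identity holds uniformly in both the pseudo-K\"ahler and para-K\"ahler cases, and is exactly what is needed. Applying it to each of the two terms of $\G(\bar{X},\bar{Y})$, using the symmetry of $g$, and rearranging would give
\[
\G(\bar{X},\bar{Y}) = g(\K\bar{Y},\j\Pi\bar{X}) - g(\Pi\bar{Y},\j\K\bar{X}) = \G(\bar{Y},\bar{X}),
\]
so $\G$ is symmetric. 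Everything here is tensorial and holds pointwise.

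Finally, for non-degeneracy I would argue abstractly rather than in coordinates: $\Omega$ is a symplectic form, hence non-degenerate, and $\J$ is invertible (since $\J^2 = -\epsilon\,\mathrm{Id}$), so the composition $\G(\cdot,\cdot) = \Omega(\cdot,\J\,\cdot)$ is again non-degenerate. A symmetric non-degenerate bilinear form is precisely a pseudo-Riemannian metric, which completes the proof. I do not expect any genuine obstacle here; the only point requiring care is to invoke the K\"ahler compatibility in the uniform skew-symmetry form $g(\j A,B) = -g(A,\j B)$, which is valid in both settings, rather than the isometry/anti-isometry relation $g(\j A,\j B) = \epsilon\, g(A,B)$, whose sign depends on $\epsilon$.
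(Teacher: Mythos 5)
Your proposal is correct and follows exactly the route the paper intends: the displayed formula is the immediate substitution of Proposition~\ref{para} into Lemma~\ref{symplectic}, which is why the paper states this as a corollary without proof. Your completion of the details the paper leaves implicit is also sound --- the uniform skew-symmetry $g(\j A,B)=-g(A,\j B)$ (valid for $\epsilon=\pm 1$, unlike the $\epsilon$-dependent isometry relation) gives symmetry, and non-degeneracy of $\G$ follows from that of $\Omega$ together with invertibility of $\J$.
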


\begin{proof}[Proof of Proposition \ref{para}]
  Let us write the splitting of $T T \M$ in a local coordinate $x$ as in the
  proof of Proposition \ref{almost_structure} (\footnote{
  The Reader should be aware of the conflicting notation: the splitting of $T T \M \simeq \R^{4n}$ as
  $\mathbb{R}^{2n} \oplus \mathbb{R}^{2n}$ induced by the coordinate charts 
  (e.g. $\bar{X}\simeq((x,\xi),(X,\Xi))$) differs a priori from the connection-induced splitting 
  $\bar{X} \simeq (\Pi \bar{X}, \K \bar{X})$.}). 
  The Levi--Civita connection is
  expressed through its connection form $\mu$: $\nabla_X Y = d Y (X) + \mu(X) Y$. 
  Consequently, if $(X, \Xi) \in T_{(x, \xi)} T \M$, $\Pi (X, \Xi) = X$ 
  and $\K(X, \Xi) = \Xi + \mu (X) \xi$. Thus  
  \[ \Pi (\J (X, \Xi)) = \j X 
     \text{ and } 
     \K (\J (X, \Xi)) = \j(x) \Xi + (d \j (x) \xi) X + \mu(\j (x) X) \xi . 
  \]
  Because $\j$ is integrable, we may choose $x$ to be a complex coordinate, so
  that $\j$ is a constant endomorphism, and $d \j (x) \xi$
  vanishes. Because $\M$ is K\"ahler, we know that $\mu (X)$
  commutes with $\j$. However, $\nabla$ being without torsion, $\mu(X) Y = \mu (Y) X$, so
  \[
  \K (\J (X, \Xi)) = \j \Xi + \j \mu(X) \xi = \j \K (X, \Xi) . \qedhere
  \]
\end{proof}

\begin{corollary} \label{compatible2}
The symplectic form $\Omega$ is compatible with the complex or para-complex structure $\J$. 
\end{corollary}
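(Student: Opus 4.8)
The plan is to unwind the definition: compatibility of $\Omega$ with $\J$ means $\Omega(\J\bar X,\J\bar Y)=\epsilon\,\Omega(\bar X,\bar Y)$, in accordance with the convention fixed in the Introduction (recall $\epsilon$ is defined by $\J^2=-\epsilon\,\mathrm{Id}$, and that the analogous relation with $-\epsilon$ was exactly the obstruction on the cotangent bundle). There are two short routes to this identity, both relying entirely on the results already established.

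The first, computational route uses the explicit formulas. Combining Lemma \ref{symplectic} with the expression $\J\bar X\simeq(\j\Pi\bar X,\j\K\bar X)$ from Proposition \ref{para}, one writes
\[
\Omega(\J\bar X,\J\bar Y)=g(\j\K\bar X,\j\Pi\bar Y)-g(\j\Pi\bar X,\j\K\bar Y).
\]
Then I would invoke the pseudo- or para-K\"ahler compatibility of $g$ with $\j$, namely $g(\j\,\cdot,\j\,\cdot)=\epsilon\,g(\cdot,\cdot)$, to factor out $\epsilon$ and recover $\epsilon\,\Omega(\bar X,\bar Y)$ via Lemma \ref{symplectic} once more.

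The second, cleaner route deduces the statement formally from Corollary \ref{metric}. Since $\G(\cdot,\cdot)=\Omega(\cdot,\J\cdot)$ is symmetric, we have $\Omega(\bar X,\J\bar Y)=\Omega(\bar Y,\J\bar X)$ for all $\bar X,\bar Y$. Replacing $\bar X$ by $\J\bar X$ and using $\J^2=-\epsilon\,\mathrm{Id}$ (Proposition \ref{para}, $\J$ being an almost complex resp.\ para-complex structure) together with the antisymmetry of $\Omega$ gives
\[
\Omega(\J\bar X,\J\bar Y)=\Omega(\bar Y,\J^2\bar X)=-\epsilon\,\Omega(\bar Y,\bar X)=\epsilon\,\Omega(\bar X,\bar Y),
\]
which is precisely compatibility. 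I would favour this argument, as it reuses the symmetry already proved and makes transparent that compatibility of $\Omega$ and symmetry of $\G$ are two faces of the same identity.

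There is no genuine obstacle here: the corollary is a one-line consequence of Corollary \ref{metric} (or of Lemma \ref{symplectic} and Proposition \ref{para}). The only point requiring care is the bookkeeping of the sign $\epsilon$, i.e.\ applying the convention $\J^2=-\epsilon\,\mathrm{Id}$ consistently so that the final sign matches the required relation $\Omega(\J\cdot,\J\cdot)=\epsilon\,\Omega(\cdot,\cdot)$ rather than its (para-)complex opposite --- exactly the discrepancy that, as noted in the Introduction, spoiled the parallel attempt on $T^*\M$.
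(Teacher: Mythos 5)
Both of your routes are correct, and your first (computational) route is essentially verbatim the paper's own proof: apply Lemma~\ref{symplectic}, push $\J$ through $\Pi$ and $\K$ via Proposition~\ref{para}, and use $g(\j\,\cdot,\j\,\cdot)=\epsilon\,g(\cdot,\cdot)$. Your second, preferred route is genuinely different in flavour: it derives compatibility purely formally from the antisymmetry of $\Omega$, the identity $\J^2=-\epsilon\,\mathrm{Id}$, and the symmetry of $\G(\cdot,\cdot)=\Omega(\cdot,\J\cdot)$ asserted in Corollary~\ref{metric}, with no further appeal to the splitting of $TT\M$ or to the metric identities of $\j$. What this buys is the transparent equivalence you point out: given that $\Omega$ is a $2$-form and $\J^2=-\epsilon\,\mathrm{Id}$, symmetry of $\G$ and compatibility of $\Omega$ with $\J$ are the same statement. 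The one point to watch is circularity: the paper states the symmetry of $\G$ in Corollary~\ref{metric} without proof, so your argument is only complete once that symmetry is grounded independently of compatibility --- which it is, since it follows from the explicit formula $\G(\bar X,\bar Y)=g(\K\bar X,\j\Pi\bar Y)-g(\Pi\bar X,\j\K\bar Y)$ together with the skew-adjointness of $\j$ with respect to $g$ (i.e.\ $g(\j\,\cdot,\cdot)=-g(\cdot,\j\,\cdot)$, valid in both the pseudo- and para-K\"ahler cases). A careful write-up of your second route should make that dependence explicit; otherwise it risks proving the identity from itself.
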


\begin{proof}
Using Lemma \ref{symplectic}, we compute
\begin{eqnarray*} \Omega(\J \bar{X},\J \bar{Y})&=&g(\K \J \bar{X}, \Pi \J \bar{Y})-g(\Pi \J \bar{X},\K \J \bar{Y})\\
&=&g(\j \K  \bar{X}, \j \Pi  \Bar{Y})-g(\j  \Pi  \bar{X}, \j \K \bar{Y})\\
&=&\epsilon g( \K  \bar{X},  \Pi  \Bar{Y})-\epsilon g( \Pi  \bar{X},  \K \bar{Y})\\
&=&\epsilon  \Omega(\bar{X},\bar{Y}). 
\end{eqnarray*} 
\end{proof}

\section{The Levi-Civita connection of $\G$}

The following lemma describes the Levi-Civita connection $\LC$ of $\G$ in terms of the direct
decomposition of $TT\M$, the Levi-Civita connection $\nabla$ of $g$ and its curvature tensor $\text{R}$.

\begin{lemma} \label{connection}
Let $\bar{X}$ and $\bar{Y}$ be  two vector fields on $T\M$ and assume that $\bar{Y}$ is projectable, 
then at the point $(x,V)$ we have
\[ (\LC_{\bar{X}} \bar{Y})_{|V} = \left( \nabla_{\Pi \bar{X} } \Pi \bar{Y},\nabla_{\Pi \bar{X} } \K \bar{Y}
        - \text{\emph{T}}_1(\Pi \bar{X},\Pi \bar{Y}, V) \right),
\]
where 
\begin{eqnarray*}
\text{\emph{T}}_1(X,Y,V) & = & \frac{1}{2}\Big( \text{\emph{R}}( X ,  Y)V - \epsilon \text{\emph{R}}(V, \j  X) \j Y - \epsilon \text{\emph{R}}(V, \j   Y) \j X\Big)
\end{eqnarray*}
Moreover, if $\M$  is a pseudo-Riemannian surface with Gaussian curvature $c$, we have
\[
\text{\emph{T}}_1(X,Y,V) = \left\{ \begin{array}{ll}
- 2 c g(V,X) Y  & \text{ in the K\"ahler case, }
\\
+ 2 c g(V,Y) X  & \text{ in the para-K\"ahler case.}
\end{array} \right.
\]

\end{lemma}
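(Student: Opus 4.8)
The plan is to obtain $\LC$ from the Koszul formula and to read off its two components via the non-degeneracy of $\G$. Because $(\LC_{\bar X}\bar Y)_{|V}$ is tensorial in $\bar X$ and $\bar Y$ is assumed projectable, it is enough, by Lemma~\ref{bracket}, to treat the four cases in which $\bar X,\bar Y$ are each a horizontal or vertical lift $X^h,X^v$ of vector fields $X,Y$ on $\M$; the general statement then follows from $C^\infty$-linearity in $\bar X$ and the derivation rule in $\bar Y$. The first ingredient is the value of $\G$ on lifts, read off from Corollary~\ref{metric}:
\[
\G(X^h,Y^h)=\G(X^v,Y^v)=0,\qquad \G(X^h,Y^v)=-g(X,\j Y),\qquad \G(X^v,Y^h)=g(X,\j Y).
\]
Thus $\G$ is off-diagonal for the splitting, so pairing $\LC_{\bar X}\bar Y$ with $W^v$ isolates its horizontal part and pairing with $W^h$ isolates its vertical part. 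Two elementary remarks feed the computation: the coefficient functions $\pm g(X,\j Y)$ are pulled back from $\M$, hence constant along the fibres, so their vertical derivatives vanish while a horizontal lift $X^h$ differentiates them as $X$ on the base, giving $X\,g(Y,\j W)=g(\nabla_X Y,\j W)+g(Y,\j\nabla_X W)$ since $g$ and $\j$ are $\nabla$-parallel (the K\"ahler condition); and among the brackets of Lemma~\ref{bracket} only $[X^h,Y^h]\simeq([X,Y],-R(X,Y)V)$ carries curvature.

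With these in hand I would run the Koszul formula in the four cases. In the cases $(X^h,Y^v)$, $(X^v,Y^h)$ and $(X^v,Y^v)$ no curvature appears, and after the unwanted $\nabla W$-terms cancel (as they must, the left-hand side being tensorial in the test field) one finds $\LC_{X^h}Y^v=(\nabla_X Y)^v$ and $\LC_{X^v}Y^h=\LC_{X^v}Y^v=0$, matching the stated formula (the identity $\LC_{X^v}Y^h=0$ also follows from $[X^v,Y^h]=-(\nabla_Y X)^v$ and the torsion-freeness of $\LC$). The essential case is $(X^h,Y^h)$: pairing with $W^v$ and using $[X,Y]=\nabla_X Y-\nabla_Y X$ collapses the base-derivative terms to $-2g(\nabla_X Y,\j W)$, identifying the horizontal part with $(\nabla_X Y)^h$; pairing with $W^h$, only the curvature parts of $[X^h,Y^h]$, $[Y^h,W^h]$, $[W^h,X^h]$ survive and yield
\[
2\,\G(\LC_{X^h}Y^h,W^h)=-g(R(X,Y)V,\j W)+g(R(Y,W)V,\j X)-g(R(W,X)V,\j Y).
\]

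The main obstacle is to recast this right-hand side in closed form. Here I would apply the pair symmetry $g(R(A,B)C,D)=g(R(C,D)A,B)$ to the last two terms and then the K\"ahler identities $R(A,B)\j=\j\,R(A,B)$ and $g(\j\cdot,\j\cdot)=\epsilon\,g(\cdot,\cdot)$ to transfer a $\j$ onto the test slot, so that every term acquires the common factor $g(\cdot,\j W)$. Non-degeneracy of $g$ then gives
\[
\K(\LC_{X^h}Y^h)=-\tfrac12\big(R(X,Y)V-\epsilon\,R(V,\j X)\j Y-\epsilon\,R(V,\j Y)\j X\big)=-T_1(X,Y,V),
\]
which is the assertion. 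I expect this curvature bookkeeping --- tracking the antisymmetries of $R$ while carrying the two signs $\epsilon=\pm1$ for the complex and para-complex cases simultaneously --- to be the delicate point.

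For the surface case $n=1$ I would substitute the pointwise constant-curvature expression $R(X,Y)Z=c\,(g(Y,Z)X-g(X,Z)Y)$, valid for any pseudo-Riemannian surface, into $T_1$ and simplify using $g(\j\cdot,\j\cdot)=\epsilon\,g(\cdot,\cdot)$ together with the planar identity $g(A,B)C-g(A,C)B=\pm\,\omega(B,C)\,\j A$ (the vanishing of the antisymmetrisation of three vectors in a plane). Treating $\epsilon=1$ and $\epsilon=-1$ separately then reduces the three curvature terms to the single monomials $-2c\,g(V,X)Y$ and $+2c\,g(V,Y)X$ respectively. This final reduction is routine once the planar identity is in hand.
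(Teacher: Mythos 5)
Your treatment of the general formula is correct and is, in all essentials, the paper's own proof: the same values of $\G$ on lifts, the same observation that pairing against $W^v$ (resp.\ $W^h$) isolates the horizontal (resp.\ vertical) component, the same three curvature-free cases, and, in the horizontal--horizontal case, the same conversion of $g(\mathrm{R}(X,W)V,\j Y)+g(\mathrm{R}(Y,W)V,\j X)$ using the pair symmetry of $\mathrm{R}$, the parallelism of $\j$, and $g(\j\cdot,\j\cdot)=\epsilon\,g(\cdot,\cdot)$. Up to that point there is nothing to object to.

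The genuine gap is the surface case, which you dismiss as routine: that reduction cannot end where you say it ends, because the displayed surface formulas are inconsistent with the definition of $T_1$ (with its factor $\tfrac12$) that you have just used. Substituting $\mathrm{R}(X,Y)Z=c\,(g(Y,Z)X-g(X,Z)Y)$ into $T_1$ yields, in \emph{both} cases,
\[
T_1(X,Y,V)=-c\,g(V,X)\,Y .
\]
A one-line check: take $X=Y=e_1$ with $e_2=\j e_1$ orthonormal. The definition gives $2T_1(e_1,e_1,V)=-2\epsilon\,\mathrm{R}(V,e_2)e_2$, while $\mathrm{R}(V,e_2)e_2=c\big(g(e_2,e_2)V-g(V,e_2)e_2\big)=\epsilon\,c\,g(V,e_1)e_1$ (here $g(e_2,e_2)=\epsilon\,g(e_1,e_1)$), so $T_1(e_1,e_1,V)=-c\,g(V,e_1)e_1$ regardless of $\epsilon$. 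This is \emph{half} the claimed K\"ahler value $-2c\,g(V,X)Y$, and it has the opposite sign (and the wrong slots) compared with the claimed para-K\"ahler value $+2c\,g(V,Y)X$. The paper itself is inconsistent at exactly this point: its own computation ends with $2T_1=c\big((1-\epsilon)g(V,Y)X-(1+\epsilon)g(V,X)Y\big)$, i.e.\ half of what the lemma states, and that computation moreover contains an algebra slip (when $\j$ is brought inside the bracket, $\j^2$ is replaced by $-1$ instead of $-\epsilon$), which is what manufactures the spurious asymmetry between the two cases. So a correct execution of your planar-identity reduction would have forced you to the uniform formula above, not to the stated monomials; any argument that lands on the stated monomials necessarily contains an error.

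The correction is harmless for the rest of the paper: with $T_1(X,Y,V)=-c\,g(V,X)Y$ one finds $T_2(X,Y,Z,V)=g\big((Y.c)X-(X.c)Y,\,V\big)\,Z$, which vanishes identically if and only if $c$ is constant, so Corollary~\ref{coroWeyl} and the self-duality discussion go through unchanged.
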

\begin{proof}
We use Lemma \ref{bracket} together with the Koszul formula:
\begin{eqnarray*}
2\G(\LC_{\bar{X}} \bar{Y}, \bar{Z}) &=& \bar{X} \G(\bar{Y},\bar{Z}) + \bar{Y} \G(\bar{X}, \bar{Z})
-\bar{Z} \G(\bar{X},\bar{Y}) + \G([\bar{X},\bar{Y}],\bar{Z}) \\
&& -\G([\bar{X},\bar{Z}],\bar{Y})-\G([\bar{Y},\bar{Z}],\bar{X}),
\end{eqnarray*}
where $X$, $Y$ and $Z$ are three vector fields on $T\M$.
From the fact that $[X^v,Y^v]$ and $\G(X^v,Y^v)$ vanish we have:
\begin{eqnarray*}
2\G(\LC_{X^v} Y^v, Z^v) &=& X^v \G(Y^v,Z^v)+Y^v\G(X^v,Z^v)-Z^v\G(X^v,Y^v) \\
&&+\G([X^v,Y^v],Z^v)-\G([X^v,Z^v],Y^v)-\G([Y^v,Z^v],X^v)\\
&=&0 .
\end{eqnarray*}
Moreover, taking into account that $\G(Y^v,Z^h)$ and similar quantities are constant on the fibres,
we obtain
\begin{eqnarray*}
2\G(\LC_{X^v} Y^v, Z^h) &=& X^v \G(Y^v,Z^h)+Y^v\G(X^v,Z^h)-Z^h\G(X^v,Y^v) \\
&& + \G([X^v,Y^v],Z^h)-\G([X^v,Z^h],Y^v)-\G([Y^v,Z^h],X^v) \\
& = & -\G(-(\nabla_Z X)^v,Y^v)-\G(-(\nabla_Z Y)^v,X^v)\\
&=&0.
\end{eqnarray*}
From these last two equations we deduce that $\LC_{X^v} Y^v$ vanishes.
Analogous computations show that $ \LC_{X^v} Y^h$ vanishes as well.
From Lemma \ref{bracket} and the formula $[\bar{X},\bar{Y}]=\LC_{\bar{X}} \bar{Y}-\LC_{\bar{Y}} \bar{X}$, 
we deduce that
 \begin{equation} \label{uno}
 \LC_{X^h} Y^v \simeq (0, \nabla_X Y).
 \end{equation}
Finally, introducing 
\[
\text{T}_1(X,Y,V):=\frac{1}{2}\Big(\text{R}(X,Y)V - \epsilon  \text{R}(V,\j Y)\j X - \epsilon \text{R}(V,\j X)\j Y \Big),
\]
we compute that
\begin{eqnarray*}
2\G(\LC_{X^h} Y^h, Z^h)&=&-g(\text{R}(X,Y)V,\j Z) + g(\text{R}(X,Z)V,\j Y) + g(\text{R}(Y,Z)V, \j X)\\
&=&-g(\text{R}(X,Y)V,\j Z) + g(\text{R}(V, \j Y)X,Z)+ g(\text{R}(V,\j X)Y,Z)\\
&=& -g(\text{R}(X,Y)V,\j Z) + \epsilon g(\text{R}(V,\j Y)\j X,\j Z) + \epsilon g(\text{R}(V,\j X)\j Y,\j Z) \\
&=& - g(2\text{T}_1(X,Y,V),\j Z)
\end{eqnarray*}
and
\[
\G(\LC_{X^h} Y^h, Z^v) = - g(\nabla_X Y , \j Z),
\]
from which we deduce that
\begin{equation} \label{dos}
\LC_{X^h} Y^h (V) = (\nabla_X Y, - \text{T}_1(X,Y,V)).
\end{equation}
From (\ref{uno}) and (\ref{dos}) we deduce the required formula for $\LC_{\bar{X}} \bar{Y}$.
 
\medskip

If $n=1$, we have $\text{R}(X,Y)Z= c(g(Y,Z)X - g(X,Z)Y)$, hence the tensor $\text{T}_1$ becomes:
\begin{eqnarray*}
2 \text{T}_1(X,Y,V) & = & \text{R}(X,Y)V + \epsilon \j \text{R}(V,\j X) Y + \epsilon \j \text{R}(V,\j Y) X 
\\
& = & c \Big( g(Y,V)X - g(X,V)Y 
\\
&& - \epsilon \j \big( g(\j X,Y)V - g(V,Y)\j X  + g(\j Y,X)V - g(V,X)\j Y	 \big) \Big)
\\
& = & c \big( g(Y,V)X - g(X,V)Y 
\\
&& - \epsilon \big( g(\j X,Y) \j V + g(V,Y) X  + g(\j Y,X) \j V + g(V,X) Y \big) \big)
\\
& = & c \big( (1-\epsilon ) g(V,Y) X  - (1+\epsilon ) g(V,X) Y \big) .
\end{eqnarray*}
\end{proof}

\begin{remark} \label{vertical_derivation}
It should be noted that covariant derivatives with respect to a projectable vertical field $X^v$
always vanish.
\end{remark}

\begin{proposition}		
The structure $\J$ is parallel with respect to $\LC$.
\end{proposition}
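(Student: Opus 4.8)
The plan is to show that $\LC \J = 0$ by verifying that $(\LC_{\bar{X}} \J) \bar{Y} = 0$ for all vector fields, which by tensoriality and the projectability of the reference fields reduces to checking the identity on the basis of horizontal and vertical lifts $X^h, X^v$ of vector fields on $\M$. Since $(\LC_{\bar{X}} \J)\bar{Y} = \LC_{\bar{X}}(\J \bar{Y}) - \J(\LC_{\bar{X}} \bar{Y})$, and by Proposition~\ref{para} the structure $\J$ acts on the splitting simply as $\J(X_h, X_v) = (\j X_h, \j X_v)$, I would compute both terms using the explicit formula for $\LC$ from Lemma~\ref{connection} and compare.

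First I would handle the easy cases coming from Remark~\ref{vertical_derivation}: since covariant derivatives along a projectable vertical field $X^v$ vanish, we have $\LC_{X^v}(\J Y^h) = 0$ and $\LC_{X^v} Y^h = 0$, so $(\LC_{X^v} \J) Y^h = 0$ trivially, and likewise for $Y^v$. This disposes of all derivatives in the vertical direction. Next, for the horizontal directions, I would use equation~(\ref{uno}), namely $\LC_{X^h} Y^v \simeq (0, \nabla_X Y)$, to compute $(\LC_{X^h} \J) Y^v$: here $\J Y^v \simeq (0, \j Y)$, so $\LC_{X^h}(\J Y^v) \simeq (0, \nabla_X(\j Y))$ while $\J(\LC_{X^h} Y^v) \simeq (0, \j \nabla_X Y)$, and these agree precisely because $\j$ is parallel for $\nabla$ (the K\"ahler condition $\nabla \j = 0$).

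The last and most delicate case is $(\LC_{X^h} \J) Y^h$, which requires equation~(\ref{dos}): $\LC_{X^h} Y^h|_V = (\nabla_X Y, -\text{T}_1(X,Y,V))$. Writing $\J Y^h \simeq (\j Y)^h$, I would apply~(\ref{dos}) again to get $\LC_{X^h}(\J Y^h)|_V = (\nabla_X (\j Y), -\text{T}_1(X, \j Y, V))$, and compare with $\J(\LC_{X^h} Y^h)|_V = (\j \nabla_X Y, -\j\, \text{T}_1(X,Y,V))$. The horizontal components match again by $\nabla \j = 0$, so everything reduces to the vertical identity $\text{T}_1(X, \j Y, V) = \j\, \text{T}_1(X, Y, V)$. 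I expect this to be the main obstacle. Expanding the definition of $\text{T}_1$ and using the K\"ahler symmetries of the curvature tensor---in particular that $\text{R}$ commutes with $\j$ in the sense $\j \text{R}(X,Y) = \text{R}(X,Y)\j$ and the identity $\text{R}(\j X, \j Y) = \epsilon \text{R}(X,Y)$ together with $\j^2 = -\epsilon\,\text{Id}$---I would verify that the three curvature terms transform correctly under replacing $Y$ by $\j Y$ and applying $\j$, the factors of $\epsilon$ conspiring to make the two sides equal. This is the only step where the full strength of the K\"ahler curvature identities enters, so I would lay out the symmetries of $\text{R}$ carefully before the computation.
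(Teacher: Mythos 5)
Your reduction is exactly the one the paper uses: the cases $(\LC_{X^v}\J)\bar Y$ and $(\LC_{X^h}\J)Y^v$ are disposed of by Remark~\ref{vertical_derivation}, equation~(\ref{uno}) and $\nabla\j=0$, and the horizontal--horizontal case reduces via equation~(\ref{dos}) to the single identity $\mathrm{T}_1(X,\j Y,V)=\j\,\mathrm{T}_1(X,Y,V)$, which is precisely the identity the paper isolates as the whole content of the proposition. Up to that point your argument is correct and complete, and you rightly flag this identity as the main obstacle.

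The gap is in how you propose to prove it. Expanding both sides using $\j^2=-\epsilon\,\mathrm{Id}$ and the commutation $\j\,\mathrm{R}(\cdot,\cdot)=\mathrm{R}(\cdot,\cdot)\,\j$ (all factors of $\epsilon$ cancel at this stage), the identity to prove reads
\begin{equation*}
\mathrm{R}(X,\j Y)V+\mathrm{R}(V,\j X)Y+\mathrm{R}(V,Y)\j X
\;=\;
\mathrm{R}(X,Y)\j V+\mathrm{R}(V,\j X)Y+\mathrm{R}(V,\j Y)X ,
\end{equation*}
and only the middle terms coincide: the outer terms do \emph{not} ``transform correctly'' one by one. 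For instance on a constant-curvature K\"ahler surface, taking $X=Y=V$ a unit vector gives $\mathrm{R}(X,\j Y)V=-c\,\j X$ while $\mathrm{R}(X,Y)\j V=0$. The identities you list --- commutation, $\mathrm{R}(\j X,\j Y)=\epsilon\,\mathrm{R}(X,Y)$, and $\j^2=-\epsilon\,\mathrm{Id}$ --- only move $\j$ \emph{within} the argument pair of $\mathrm{R}$ or past the endomorphism; what is needed is to transfer a single $\j$ \emph{across} the pairs (from $\mathrm{R}(X,\j Y)V$ to $\mathrm{R}(X,Y)\j V$), and no combination of your three identities can do that. The missing ingredient is the first Bianchi identity, which is exactly what the paper invokes: the four unmatched terms vanish only after regrouping them and applying Bianchi (together with commutation). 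Bianchi is genuinely indispensable here, not a shortcut: the algebraic tensor $\mathrm{R}(X,Y):=g(\j X,Y)\,\mathrm{Id}$ (complex case $\epsilon=1$) is antisymmetric, commutes with $\j$, and satisfies $\mathrm{R}(\j X,\j Y)=\mathrm{R}(X,Y)$, yet for unit $X=Y$ and $V$ orthogonal to $X$ and $\j X$ the difference of the two sides above equals $V\neq 0$ --- precisely because this tensor fails Bianchi. So your computation will only close once you add the first Bianchi identity to the list of curvature symmetries you lay out before the calculation.
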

\begin{proof}
It can be seen as a trivial consequence of the fact that $\J$ is complex (resp.\ para-complex) and 
$\Omega$ is closed, but can also be checked directly, using the equivariance properties of $\j$ w.r.t.\ 
 the connection $\nabla$ and the curvature tensor $\rm{R}$. Using the definition of $\J$ 
and Lemma~\ref{connection}, $\LC_{\bar{X}} \J \bar{Y}$ is obvious provided 
$\text{T}_1(X,\j Y,V) = \j \text{T}_1(X,Y,V)$. That is indeed the case since
\begin{multline*}
2 (\text{T}_1(X,\j Y,V) - \j \text{T}_1(X,Y,V)) 
\\
\begin{split}
= & \text{R} \left( X, \j Y \right) V + \text{R} \left( V, \j X \right) Y + \text{R} \left( V, Y
  \right) \j X
\\
&   - \text{R} \left( X, Y \right) \j V - \text{R} \left( V, \j X \right) Y - \text{R} \left( V, \j Y
  \right) X
\\
= & \text{R} \left( X, \j Y \right) V + \text{R} \left( \j Y, V \right) X 
+ \j \left( \text{R} \left( V, Y \right) X + \text{R} \left( Y, X \right) V \right)\\
= &  \text{R} \left( V, X \right) \j Y + \j \text{R} \left( X, V \right) Y = 0, 
\end{split}
\end{multline*}
where we have used Bianchi's identity.
\end{proof}

\section{Curvature properties of $(\J,\G)$}

\subsection{The Riemannian curvature tensor of $\G$}

\begin{proposition} \label{curvature}
The curvature  tensor $\widetilde{\Rm}:=-\G(\tilde{\text{R}}.,.)$ of $\G$
at $(x,V)$ is given by the formula
\begin{eqnarray*}
\widetilde{\Rm} (\bar{X},\bar{Y}, \bar{Z}, \bar{W}) 
& = &  g(\text{T}_2(\Pi \bar{X},\Pi \bar{Y},\Pi  \bar{Z},V), \j \Pi \bar{W}) 
\\
&& \, \, - \Rm( \Pi \bar{X},\Pi \bar{Y},\Pi \bar{Z},\j  \K \bar{W}) 
- \Rm( \Pi \bar{X},\Pi \bar{Y},\j  \K \bar{Z}, \Pi \bar{W})
\\
&& \, \, \,  - \Rm(\Pi \bar{X},\j  \K \bar{Y}, \Pi \bar{Z},\Pi \bar{W}) 
+  \Rm(\j  \K \bar{X}, \Pi \bar{Y},\Pi \bar{Z},\Pi \bar{W}),
\end{eqnarray*}
where 
\[
\text{\emph{T}}_2(X,Y,Z,V):= (\nabla_X \text{\emph{T}}_1)(Y,Z,V) - (\nabla_Y \text{\emph{T}}_1)(X,Z,V). 
\]
Moreover,  $(T\M,\G)$ is scalar flat and  the Ricci tensor of $\G$ is
\[
\tric(\bar{X},\bar{Y})=  2 \ric(\Pi \bar{X},  \Pi \bar{Y}).
\]
\end{proposition}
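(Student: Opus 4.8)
The plan is to compute the curvature tensor $\widetilde{\Rm}$ directly from the Levi-Civita connection $\LC$ described in Lemma~\ref{connection}, using the standard formula
\[
\tilde{\RR}(\bar{X},\bar{Y})\bar{Z} = \LC_{\bar{X}}\LC_{\bar{Y}}\bar{Z} - \LC_{\bar{Y}}\LC_{\bar{X}}\bar{Z} - \LC_{[\bar{X},\bar{Y}]}\bar{Z}.
\]
By tensoriality of curvature, it suffices to evaluate $\widetilde{\Rm}$ on horizontal and vertical lifts $X^h, X^v$ of vector fields on $\M$, for which both the connection formulas \eqref{uno}, \eqref{dos} and the bracket relations of Lemma~\ref{bracket} are explicit. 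First I would observe, using Remark~\ref{vertical_derivation}, that any term beginning with a vertical covariant derivative vanishes, which kills most of the $2^4 = 16$ combinations of lifts immediately. The only surviving contributions come from three horizontal entries together with the fourth slot, and from the mixed horizontal/vertical couplings that feed into $\text{T}_1$.

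Next I would compute the purely horizontal component $\widetilde{\Rm}(X^h,Y^h,Z^h,W^h)$. Here the key input is that $\LC_{X^h}Y^h = (\nabla_X Y, -\text{T}_1(X,Y,V))$, so that iterating $\LC$ produces the covariant derivative $\nabla_X \text{T}_1$ in the vertical slot; the bracket term $\LC_{[X^h,Y^h]}Z^h$ contributes through the $-\text{R}(X,Y)V$ vertical part of $[X^h,Y^h]$ from Lemma~\ref{bracket}. Assembling these and pairing against $W^h$ via $\G$, whose definition (Corollary~\ref{metric}) reads off the vertical part against $\j \Pi$, should produce exactly the $g(\text{T}_2(\Pi\bar{X},\Pi\bar{Y},\Pi\bar{Z},V),\j\Pi\bar{W})$ term, where $\text{T}_2$ is the antisymmetrized covariant derivative of $\text{T}_1$. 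The remaining four $\Rm$-terms arise from configurations with one vertical entry, where $\LC_{X^h}Z^v \simeq (0,\nabla_X Z)$ interacts with the curvature-valued bracket to reproduce the base curvature $\Rm$ evaluated at $\j\K$ in the appropriate slot; the alternating signs track which of the four positions carries the vertical direction.

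For the Ricci and scalar curvature claims, the plan is to trace $\widetilde{\Rm}$ using an adapted frame. Since $\G$ has neutral signature, I would pick a pseudo-orthonormal frame built from horizontal and vertical lifts $e_i^h, e_i^v$ of a $g$-orthonormal frame $\{e_i\}$ on $\M$, and contract the first and last arguments. The horizontal-horizontal trace over the $\Rm$-terms contracts two slots of the base curvature tensor, yielding a multiple of $\ric$; the crucial point is that the $\text{T}_2$-term is itself a derivative of $\text{T}_1$ and, being a total covariant derivative of a curvature expression, contributes nothing to the trace by the second Bianchi identity (or vanishes on symmetry grounds). Collecting the contributions from both the $e_i^h$ and $e_i^v$ directions should give $\tric(\bar{X},\bar{Y}) = 2\,\ric(\Pi\bar{X},\Pi\bar{Y})$, and then scalar-flatness follows because the scalar curvature contracts $\tric$ against the inverse of $\G$, which pairs horizontal with vertical directions only — so tracing a tensor supported purely on horizontal-horizontal slots against $\G^{-1}$ returns zero.

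\textbf{The main obstacle} I expect is bookkeeping in the purely horizontal computation: correctly identifying how the three $\LC$-compositions and the single bracket term combine so that the curvature-of-the-base pieces cancel down to precisely the antisymmetrized derivative $\text{T}_2$, with the right signs and symmetrizations, requires careful use of the first Bianchi identity and the Kähler commutation relation $\nabla \j = 0$ (which lets $\j$ pass through $\nabla$ and through $\text{R}$). A secondary subtlety is verifying that the apparently non-tensorial, $V$-dependent terms organize into the stated tensor $\text{T}_2$ rather than leaving residual $V$-derivatives; this is where the explicit surface formula for $\text{T}_1$ in Lemma~\ref{connection} serves as a useful consistency check in the $n=1$ case.
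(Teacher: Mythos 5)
Your overall route is the same as the paper's: compute $\tilde{\mathrm{R}}$ from Lemma~\ref{connection} on horizontal and vertical lifts, kill most combinations via Remark~\ref{vertical_derivation}, and extract $\text{T}_2$ from the purely horizontal component. However, what you defer as a ``secondary subtlety'' is in fact the one genuine idea the horizontal computation requires. The field $(x,V)\mapsto (0,\text{T}_1(Y,Z,V))$ appearing in $\LC_{X^h}\LC_{Y^h}Z^h$ is \emph{not} projectable (it depends on the fibre variable $V$), so Lemma~\ref{connection} does not apply to it directly and the term $\nabla_X\text{T}_1$ does not simply ``appear by iterating $\LC$''. The paper resolves this by Kowalski's lemma: extend $V$ to a vector field $U$ on $\M$ with $U(x)=V$ and $(\nabla_X U)(x)=0$, so the vertical lift of $\text{T}_1(Y,Z,U)$ agrees with the given field to first order along $X^h$; the Leibniz expansion then yields $(\nabla_X\text{T}_1)(Y,Z,V)$ plus lower-order $\text{T}_1$ terms which cancel against the bracket contribution, leaving exactly $-\text{T}_2(X,Y,Z,V)$ in the vertical slot. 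Without this (or an equivalent device) your horizontal computation does not close, and the residual $V$-derivatives you worry about are precisely what it eliminates.

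The Ricci step, as you describe it, would fail. The horizontal and vertical lifts do \emph{not} form a pseudo-orthonormal frame for $\G$: they are null vectors, since $\G$ vanishes on $H\M\times H\M$ and on $V\M\times V\M$ and pairs horizontal with vertical only. Hence one cannot contract first and last arguments as a signed diagonal sum over lifts; one must contract against $\G^{\mu\nu}$, whose horizontal--horizontal block is zero. Once this is done (as the paper does), the $\text{T}_2$ term---which lives only in the all-horizontal component of $\widetilde{\Rm}$---is multiplied by $\G^{\mu\nu}=0$ and never enters the trace at all: no Bianchi identity is needed. Your stated mechanism is moreover false: $\text{T}_2$ is a first covariant derivative of curvature, and traces of $\nabla\Rm$ do not vanish by the second Bianchi identity (the contracted Bianchi identity produces derivatives of $\ric$, not zero). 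So had you traced over the lifts as if they were orthonormal, you would have retained a genuinely nonvanishing term $\sum_a \epsilon_a\, g(\text{T}_2(X,e_a,Y,V),\j e_a)$ and the formula $\tric = 2\ric\circ(\Pi\times\Pi)$ would not follow. It is inconsistent that you invoke the correct structural fact (that $\G^{-1}$ pairs horizontal with vertical only) for scalar flatness but not here, where it is equally the point; the paper also needs the Hermitian frame $e_{n+a}=\j e_a$ and the K\"ahler symmetries of $\Rm$ to collapse the four surviving mixed terms into $2\ric(X,Y)$.
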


\begin{corollary}
$(T\M,\G)$ is Einstein if and only if $(\M,g)$ is flat. Moreover
$(T\M,\G)$ has nonnegative (resp.\ nonpositive) Ricci curvature if and only if $(\M,g)$ has nonnegative 
(resp.\ nonpositive) Ricci curvature as well. 
\end{corollary}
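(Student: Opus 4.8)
The plan is to read off both assertions directly from the two facts just recorded in Proposition~\ref{curvature}, namely that $\tric(\bar X,\bar Y)=2\ric(\Pi\bar X,\Pi\bar Y)$ and that $\G$ is scalar flat, combined with the explicit shape of $\G$ supplied by Corollary~\ref{metric}. I would dispose of the sign statement first, as it is the most transparent, and then treat the Einstein equivalence.

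For the statement on the sign of the Ricci curvature, the one thing I would use is that $\Pi=d\pi\colon T_{(x,V)}T\M\to T_x\M$ is surjective, with kernel the vertical subspace $V\M$. Hence, as $\bar X$ runs over $T_{(x,V)}T\M$, the vector $\Pi\bar X$ runs over all of $T_x\M$, so the quadratic form $\bar X\mapsto\tric(\bar X,\bar X)=2\ric(\Pi\bar X,\Pi\bar X)$ is positive (resp.\ negative) semi-definite exactly when $\ric$ is. This settles the equivalence in both directions simultaneously; I would only flag that $\tric$ is degenerate, vanishing on $V\M$, so that ``nonnegative Ricci curvature'' must be read as semi-definiteness of the tensor.

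For the Einstein equivalence I would first record the block form of $\G$ obtained from Corollary~\ref{metric}: since $\K X^h=\K Y^h=0$ one has $\G(X^h,Y^h)=0$, while $\G(X^h,Y^v)=-g(X,\j Y)$ and $\G(X^v,Y^v)=0$. Thus $\G$ is supported on the mixed horizontal-vertical block, where it is nondegenerate (a multiple of $\omega$), whereas $\tric=2\ric\circ\Pi$ is supported on the horizontal-horizontal block. Writing the Einstein condition $\tric=\lambda\G$ and restricting to horizontal lifts then forces $2\ric=0$, i.e.\ $\ric=0$; evaluating on a mixed pair forces $\lambda=0$, consistently with scalar flatness since tracing already gives $\lambda=\tilde s/4n=0$. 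For the converse, if $g$ is flat then $\Rm=0$, whence $\text{T}_1$, $\text{T}_2$ and each curvature term in Proposition~\ref{curvature} vanish, so that $\widetilde{\Rm}=0$; in particular $\G$ is flat and a fortiori Einstein.

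The step I expect to be the genuine obstacle is the forward Einstein implication: the reduction above only delivers $\ric=0$, that is, Ricci-flatness of $g$, and one still has to upgrade this to full flatness in order to match the stated equivalence. For a surface $(n=1)$ this is immediate, a Ricci-flat pseudo-Riemannian surface being flat, so the argument closes at once. In higher dimension I would have to go beyond the Ricci tensor and feed $\ric=0$ back into the full curvature tensor of Proposition~\ref{curvature}, exploiting the rigidity of the pseudo- or para-K\"ahler structure; I anticipate that controlling this passage from Ricci-flat to flat is where the real work lies.
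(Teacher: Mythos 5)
Your treatment of the sign statement and your reduction of the Einstein condition are correct, and they follow the same route as the paper, which states this corollary without proof as an immediate consequence of Proposition~\ref{curvature}: the formula $\tric = 2\,\ric\circ\Pi$ together with scalar flatness (or, in your variant, the vanishing of $\G$ on the horizontal--horizontal block, where $\tric$ is supported) forces $\lambda=0$ and $\ric=0$ in the Einstein case, while flatness of $g$ kills $\textup{T}_1$, $\textup{T}_2$ and every term of $\widetilde{\Rm}$, so that $(T\M,\G)$ is flat and a fortiori Einstein.

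But the step you flag as ``where the real work lies'' cannot be carried out, and your hesitation there pinpoints an error in the paper rather than a gap you could have closed. The Einstein hypothesis yields exactly $\ric=0$, and for $n\geq 2$ a Ricci-flat pseudo-K\"ahler metric need not be flat: any Calabi--Yau metric --- for instance a Ricci-flat K\"ahler metric on a K3 surface, or the Eguchi--Hanson metric --- is K\"ahler (hence pseudo-K\"ahler in the sense of the paper), Ricci-flat, and non-flat. For such an $(\M,g)$ the formula $\tric=2\,\ric\circ\Pi$ gives $\tric=0$, so $(T\M,\G)$ is Ricci-flat, hence Einstein, while $(\M,g)$ is not flat. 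What your argument establishes --- and all that actually follows from Proposition~\ref{curvature} --- is the equivalence ``$(T\M,\G)$ is Einstein if and only if $(\M,g)$ is Ricci-flat'', which coincides with the stated corollary only when $n=1$, where Ricci-flatness of a surface does imply flatness, as you observe. For $n\geq 2$ the corollary (and item (ii) of the Main Theorem) tacitly upgrades Ricci-flat to flat; no justification for that upgrade exists, so you should not reproach yourself for failing to find one.
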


\begin{proof}[Proof of Proposition \ref{curvature}]
We will compute the curvature tensor for projectable vector fields, and need only do so for the following 
six cases, due to the symmetries of $\widetilde{\Rm}$. Remark \ref{vertical_derivation} simplifies computations 
greatly, since most vertical derivatives vanish, except when the derived vector field is not projectable. 
In particular $\tilde{{\rm R}}(X^v,Y^v)$ vanishes as endomorphism, hence:
\begin{eqnarray*}
\widetilde{\Rm} (X^v,Y^v,Z^v,W^v) & = & 0 
\\
\widetilde{\Rm} (X^v,Y^v,Z^v,W^h) & = & 0 
\\
\widetilde{\Rm} (X^v,Y^v,Z^h,W^v) & = & 0 
\end{eqnarray*}
To obtain the  last three combinations, let us first derive $\tilde{\text{R}}(X^h,Y^h) Z^h$. 
This is more delicate since we have to covariantly differentiate non-projectable quantities. Indeed
\begin{eqnarray*} 
 \tilde{\text{R}}(X^h,Y^h)Z^h &=& \LC_{X^h} \LC_{Y^h} Z^h- \LC_{Y^h} \LC_{X^h} Z^h-\LC_{[X^h,Y^h]} Z^h
\\
& = & \LC_{X^h} ( \nabla_Y Z, - \text{T}_1(Y,Z,V)) - \LC_{Y^h} ( \nabla_X Z,-\text{T}_1(X,Z,V))
\\
&& - \LC_{([X,Y],-\text{R}(X,Y)V)} Z^h
\\
&=&  (\nabla_X \nabla_Y Z, - \text{T}_1(X,\nabla_Y Z,V)) - D_{X^h} (0,\text{T}_1(Y,Z,V))
\\
&& - (\nabla_Y \nabla_X Z, - \text{T}_1(Y,\nabla_X Z,V) ) +  D_{Y^h} (0,\text{T}_1(X,Z,V))
\\
&& - (\nabla_{[X,Y]} Z, - \text{T}_1([X,Y] ,Z,V))
\\
&=&(\rm{R}(X,Y)Z,0) 
\\
&& - \left(0, \text{T}_1(X,\nabla_Y Z,V)- \rm{T}_1(Y,\nabla_X Z,V) - \text{T}_1([ X,Y] ,Z,V) \right)
\\
&& - \LC_{X^h} (0,\text{T}_1(Y,Z,V)) + \LC_{Y^h} (0,\text{T}_1(X,Z,V) )   
\end{eqnarray*}
Recalling the lemma\footnote{Note that computations in \cite{Ko} are done for the Sasaki metric, 
hence direct results do not apply.} in \cite{Ko}, there exists a vector field $U$ on $M$ such that 
$U(x)=V$ and $(\nabla_X U)(x)=0$. Then the vertical lift of $\text{T}_1(Y,Z,U)$ is seen to agree to first order
with 
\[
(x,V) \mapsto (0,\text{T}_1(X(x),Z(x),V))
\]
thus allowing us to use the formula in Lemma \ref{connection}:
\begin{eqnarray*}
\LC_{X^h} (0,\text{T}_1(Y,Z,\cdot)) & = & \LC_{X^h} (\text{T}_1(Y,Z,U)^v )
\\
& = & \left( 0, \nabla_X (\text{T}_1(Y,Z,U)) \right) 
\\
& = & \big( 0, (\nabla_X \text{T}_1)(Y,Z,U) + \text{T}_1(\nabla_X Y,Z,U) 
\\
&& \qquad + \text{T}_1(Y, \nabla_X Z,U)  + \text{T}_1(Y,Z,\nabla_X U) \big) 
\end{eqnarray*}
which, evaluated at $(x,V)$, yields
\[
\LC_{X^h} (0,\text{T}_1(Y,Z,\cdot))|_{(x,V)} 
= (0,(\nabla_X \text{T}_1)(Y,Z,V) + \text{T}_1(\nabla_X Y,Z,V) + \text{T}_1(Y, \nabla_X Z,V)).
\]
Summing up,
\begin{eqnarray*}
\tilde{\rm{R}}(X^h,Y^h)Z^h|_{(x,V)} & = & \Big( \rm{R}(X,Y)Z , 
\\
&& - \text{T}_1(X,\nabla_Y Z,V) + \text{T}_1(Y,\nabla_X Z,V) 
\\
&& + \text{T}_1([X,Y] ,Z,V) -(\nabla_X \text{T}_1)(Y,Z,V)
\\
&& - \text{T}_1(\nabla_X Y,Z,V) - \text{T}_1(Y, \nabla_X Z,V)
\\
&& + (\nabla_Y \text{T}_1)(X,Z,V) + \text{T}_1(\nabla_Y X,Z,V) 
\\
&& + \text{T}_1(X, \nabla_Y Z,V)
\Big)
\\
& = & \Big( \text{R}(X,Y)Z , -(\nabla_X \text{T}_1)(Y,Z,V) + (\nabla_Y \text{T}_1)(X,Z,V) \Big)
\\
& = & \Big( \text{R}(X,Y)Z, -\text{T}_2(X,Y,Z,V) \Big).
\end{eqnarray*}
From that we deduce directly
\begin{eqnarray*}
\widetilde{\Rm} (X^h,Y^h,Z^h,W^v) & = &  - \Rm(X,Y,Z,\j W)
\\
 \widetilde{\Rm} (X^h,Y^h,Z^h,W^h)|_{(x,V)} & = & g(\text{T}_2(X,Y,Z,V), \j W).
\end{eqnarray*}
On the other hand, using repeatedly Remark~\ref{vertical_derivation},
\begin{eqnarray*}
\widetilde{\Rm} (X^h,Y^v,Z^h,W^v) 
& = & \G( \LC_{X^h} \LC_{Y^v} W^v - \LC_{Y^v} \LC_{X^h} W^v - \LC_{[X^h,Y^v]} W^v, Z^h )
\\
& = & \G( - \LC_{Y^v} (0,\nabla_X W), Z^h ) = \G(0,Z^h) = 0.
 \end{eqnarray*}
 The claimed formula is easily deduced using the symmetries of the curvature tensor.

\medskip

In order to calculate the Ricci curvature of $\G$, we consider a Hermitian pseudo-orthonormal basis 
$(e_1,\ldots,e_{2n})$ of $T_x\M$, i.e. $g(e_a,e_b)=\epsilon_a \delta_{ab}$, where $\epsilon_a=\pm 1$, 
and $e_{n+a}=\j e_a$. In particular, $\epsilon_{n+a} =  \epsilon \epsilon_a$.This gives a (non-orthonormal) basis
of $T_{(x,V)}T\M$:
\[\bar{e}_a:=(e_a)_h \quad \quad \bar{e}_{2n+a}:=(e_a)^v.\]
A calculation using Corollary \ref{metric}  shows that the expression of $\G$ in this basis is:
\[
[\G_{\mu \nu}]_{1 \leq \mu,\nu \leq 4n}
:= \left( \begin{array}{cccc}  0&0&0&  \Delta 
\\ 
0&0& -\Delta&0 \\ 0& -\Delta&0&0 \\  \Delta&0&0&0 \end{array} \right),
\]
where $\Delta = \epsilon \text{diag}(\epsilon_1, \ldots,\epsilon_n)=\text{diag}(\epsilon_{n+1}, \ldots,\epsilon_{2n})$.
It follows  that
$\tric(X^v,Y^v)$ and $\tric(X^h,Y^v)$ vanish.

Moreover, noting that $\G^{\mu \nu} = \G_{\mu \nu}$,
\begin{eqnarray*}
\tric(X^h,Y^h)& = &
\sum_{\mu,\nu=1}^{4n} \G^{\mu \nu} \widetilde{\Rm}(X^h,\bar{e}_\mu,Y^h,\bar{e}_\nu) 
\\
& = & \sum_{a=1}^{n} \epsilon \epsilon_a \Big(   \widetilde{\Rm}(X^h,(e_a)^h, Y^h,(\j e_a)^v)
-  \widetilde{\Rm}(X^h,(\j e_a)^h, Y^h,(e_a)^v) 
\\
&& -  \widetilde{\Rm}(X^h,(e_a)^v, Y^h,(\j e_a)^h) +  \widetilde{\Rm}(X^h,(\j e_a)^v, Y^h,(e_a)^h) \Big)
\\
& = & \sum_{a=1}^{n} \epsilon \epsilon_a \Big(- \Rm(X,e_a, Y, \j^2 e_a) + \Rm(X,\j e_a, Y, \j e_a)
\\
&& + \Rm(Y,\j e_a, X, \j e_a) - \Rm(Y,e_a, X, \j^2 e_a) 
\Big)
\\
& = & 2 \sum_{a=1}^{n} \Big( \epsilon_a \Rm(X,e_a, Y, e_a) + \epsilon_{a+n} \Rm(X, e_{a+n}, Y, e_{a+n})
\Big)
\\
&= & 2 \sum_{k=1}^{2n}  \epsilon_k \Rm(X,e_k, Y,e_k ) = 2 \ric(X,Y). \end{eqnarray*}
We see easily that $\tric$ vanishes whenever one of the vectors is along the vertical fiber,
thus the expected formula.

Finally the scalar curvature 
\[
\widetilde{\text{Scal}} = \sum_{\mu, \nu=1}^4 \G^{\mu \nu} \tric(\bar{e}_\mu, \bar{e}_\nu)=0,
\]
 since $\G^{\mu \nu} $ vanishes as soon as both $\bar{e}_\mu, \bar{e}_\nu$ are both horizontal.
\end{proof}


\subsection{The Weyl curvature tensor of $\G$} 
\begin{proposition} \label{propoWeyl}
The Weyl tensor  
$\widetilde{\text{\emph{W}}}$
at $(x,V)$ is given by
\[ \begin{array}{rcl}
\widetilde{\text{\emph{W}}}(\bar{X},\bar{Y},\bar{Z},\bar{W}) 
&=& \widetilde{\rm Rm}( \bar{X}, \bar{Y}, \bar{Z},\bar{W})
\\
&& - \frac{1}{2n-1}\Big(
\ric(\Pi \bar{X},\Pi \bar{Z}) \G(\bar{Y},\bar{W}) +\ric(\Pi \bar{Y},\Pi \bar{W}) \G(\bar{Y},\bar{W})  
\\
&& \, \, \quad \quad \, \,   -\ric(\Pi \bar{X},\Pi \bar{W}) \G(\bar{Y},\bar{Z}) -\ric(\Pi \bar{Y},\Pi \bar{Z}) 
\G(\bar{X},\bar{W})   \Big).
\end{array} 
\]
In particular, if $n=1$,
\[
\widetilde{\text{\emph{W}}}(\bar{X},\bar{Y},\bar{Z},\bar{W}) 
= g(\rm{T}_2(\Pi \bar{X},\Pi \bar{Y}, \Pi \bar{Z},V), \j \Pi \bar{W}).
\]
\end{proposition}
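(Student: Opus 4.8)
The plan is to read off both assertions from the general Weyl decomposition, feeding in the curvature data of Proposition~\ref{curvature}. On any $m$-dimensional pseudo-Riemannian manifold with $m\geq 3$ one has
\[
\widetilde{\Wd} = \widetilde{\Rm} - \frac{1}{m-2}\,\big(\tric \kn \G\big) + \frac{\widetilde{\text{Scal}}}{2(m-1)(m-2)}\,\big(\G\kn\G\big),
\]
where $\kn$ is the Kulkarni--Nomizu product. In our situation $m=4n$, and Proposition~\ref{curvature} gives $\widetilde{\text{Scal}}=0$, so the last term vanishes identically. Substituting $\tric(\bar X,\bar Y)=2\,\ric(\Pi\bar X,\Pi\bar Y)$ and simplifying the coefficient $\frac{2}{4n-2}=\frac{1}{2n-1}$ then produces the first displayed formula at once (the second summand as printed should read $\ric(\Pi\bar Y,\Pi\bar W)\,\G(\bar X,\bar Z)$, a clear misprint).

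For $n=1$ the bundle $T\M$ is four-dimensional, the coefficient $\frac{1}{2n-1}$ equals $1$, and it remains only to show that the four ``mixed'' curvature terms appearing in the expression for $\widetilde{\Rm}$ cancel the four Ricci-correction terms, so that just the $\text{T}_2$-term survives. The plan is to insert the surface curvature tensor $\Rm(A,B,C,D)=c\big(g(A,C)g(B,D)-g(A,D)g(B,C)\big)$ and $\ric=c\,g$, together with the expression $\G(\bar A,\bar B)=g(\K\bar A,\j\Pi\bar B)-g(\Pi\bar A,\j\K\bar B)$ of Corollary~\ref{metric}. Each of the two groups then expands into eight bilinear monomials in $g$; the decisive step is to reduce them to a common shape $g(\cdot,\cdot)\,g(\cdot,\j\cdot)$ by means of the skew-symmetry $g(\j A,B)=-g(A,\j B)$ (equivalently, the antisymmetry of $\omega$), after which the sixteen monomials match in pairs and the two groups coincide.

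The step I expect to be the main obstacle is the sign bookkeeping in this eight-against-eight cancellation: each transfer of $\j$ across $g$ reverses a sign, and one must keep careful track of which factor carries $\K$ (vertical) and which carries $\Pi$ (horizontal) in every monomial. As an independent safeguard I would verify the pair-exchange symmetry $\widetilde{\Wd}(\bar X,\bar Y,\bar Z,\bar W)=\widetilde{\Wd}(\bar Z,\bar W,\bar X,\bar Y)$ of the assembled expression, which the four mixed terms must respect, before concluding that they reproduce exactly the four Ricci-correction monomials.
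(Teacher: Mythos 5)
Your plan is correct and is essentially the paper's own proof: since $\widetilde{\text{Scal}}=0$, the authors write $\widetilde{\Wd}=\widetilde{\Rm}-\frac{1}{4n-2}\,\tric\kn\G$, substitute $\tric(\bar{X},\bar{Y})=2\ric(\Pi\bar{X},\Pi\bar{Y})$ (which kills every term with a vertical entry) to obtain the first formula, and for $n=1$ plug $\ric=c\,g$ and $\Rm(X,Y,Z,W)=c\big(g(X,Z)g(Y,W)-g(X,W)g(Y,Z)\big)$ into Proposition \ref{curvature} so that the mixed curvature terms cancel the Ricci corrections and only the $\text{T}_2$ term survives, exactly as you propose. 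You are also right that the second summand in the statement is a misprint for $\ric(\Pi\bar{Y},\Pi\bar{W})\,\G(\bar{X},\bar{Z})$ (a typo the paper's own expansion of $\tric\kn\G$ repeats).
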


\begin{corollary} \label{coroWeyl}
$(T\M,\G)$ is locally conformally  flat if and only if $n=1$ and $g$ has constant curvature, or $n \geq 2$ 
and $g$ is flat.
\end{corollary}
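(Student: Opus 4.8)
The plan is to exploit the fact that $\dim T\M = 4n \ge 4$, so that local conformal flatness is equivalent to the vanishing of the Weyl tensor $\widetilde{W}$ of $\G$ computed in Proposition~\ref{propoWeyl}. Two of the four implications are immediate. If $g$ is flat then $\Rm\equiv 0$, hence $T_1\equiv 0$, $T_2\equiv 0$, and $\widetilde{\Rm}\equiv 0$ by Proposition~\ref{curvature}, so $\widetilde W\equiv 0$. If $n=1$ and $g$ has constant Gaussian curvature, I will see below that $T_2\equiv 0$, and the displayed $n=1$ formula $\widetilde W(\bar X,\bar Y,\bar Z,\bar W)=g(T_2(\Pi\bar X,\Pi\bar Y,\Pi\bar Z,V),\j\Pi\bar W)$ then gives $\widetilde W\equiv 0$. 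It remains to establish the two forward implications.

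For $n=1$ the Weyl tensor is exactly $g(T_2(\cdots,V),\j\,\cdot)$, so since $g$ is nondegenerate and $\j$ invertible, $\widetilde W\equiv 0$ is equivalent to $T_2\equiv 0$. Using the surface expression $T_1(X,Y,V)=-2c\,g(V,X)Y$ (Kähler case) from Lemma~\ref{connection} and the fact that all metric factors are $\nabla$-parallel, only the differentiation of the coefficient $c$ survives, so I expect
\[
T_2(X,Y,Z,V)=2\bigl((dc)(Y)\,g(V,X)-(dc)(X)\,g(V,Y)\bigr)Z .
\]
Evaluating this on a non-null $X$, a vector $Y\perp X$, and $V=X$ forces $(dc)(Y)=0$ for all $Y\perp X$; letting $X$ range over non-null directions yields $dc\equiv 0$, i.e.\ $c$ is locally constant. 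This proves the $n=1$ equivalence in both directions, the para-Kähler case being identical up to sign.

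For $n\ge 2$ the key is to test $\widetilde W\equiv 0$ on the mixed slot $(X^h,Y^h,Z^h,W^v)$. Corollary~\ref{metric} gives $\G(Y^h,W^v)=-g(Y,\j W)$ while $\G$ vanishes on two horizontal arguments, and Proposition~\ref{curvature} gives $\widetilde{\Rm}(X^h,Y^h,Z^h,W^v)=-\Rm(X,Y,Z,\j W)$; of the four Ricci terms only the two pairing a horizontal argument with $W^v$ survive (the others carry $\ric(\cdot,\Pi W^v)=\ric(\cdot,0)=0$). Setting $\widetilde W=0$ and replacing the arbitrary vector $\j W$ by $W$ yields, pointwise on $\M$, the identity
\[
\Rm(X,Y,Z,W)=\frac{1}{2n-1}\bigl(\ric(X,Z)\,g(Y,W)-\ric(Y,Z)\,g(X,W)\bigr).
\]
Its left-hand side is antisymmetric under $Z\leftrightarrow W$, so the right-hand side must be too; polarizing the resulting constraint at a non-null vector gives $\ric(X,X)\,g(Y,Y)=\ric(Y,Y)\,g(X,X)$, whence $\ric=\lambda g$ is Einstein. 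Feeding this back, the identity becomes the constant-sectional-curvature form, equivalently $\mathrm R(X,Y)Z=\kappa\bigl(g(Y,Z)X-g(X,Z)Y\bigr)$ with $\kappa=\lambda/(2n-1)$, up to the sign convention relating $\Rm$ and $\mathrm R$.

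The final and decisive step—which I expect to be the main obstacle, together with the correct bookkeeping of the surviving Ricci terms—is to rule out $\kappa\neq 0$ when $n\ge 2$. Since $\j$ is $\nabla$-parallel, $\mathrm R(X,Y)$ commutes with $\j$; applying this to the constant-curvature form forces, for all $X,Y,Z$,
\[
g(Y,\j Z)\,X-g(X,\j Z)\,Y=g(Y,Z)\,\j X-g(X,Z)\,\j Y .
\]
Taking a Hermitian pseudo-orthonormal basis as in Proposition~\ref{curvature}, which for $n\ge 2$ contains four independent vectors $e_1,\j e_1,e_2,\j e_2$, and evaluating at $X=e_1$, $Y=e_2$, $Z=e_1$ makes the left-hand side vanish while the right-hand side equals $-g(e_1,e_1)\,\j e_2\neq 0$. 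Hence $\kappa=0$ and $g$ is flat. This argument uses the extra direction $e_2$ in an essential way, which is precisely what is unavailable for $n=1$; this dichotomy is the heart of the statement, and the same computation works verbatim in the para-Kähler case.
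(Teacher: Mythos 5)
Your proposal is correct and follows essentially the same approach as the paper: for $n=1$ you reduce conformal flatness to the vanishing of $\mathrm{T}_2$ and hence to $dc=0$, and for $n\geq 2$ you evaluate the Weyl tensor on $(X^h,Y^h,Z^h,W^v)$, obtain the same curvature identity, and deduce that $g$ is Einstein and then of constant sectional curvature. The only difference is the last step, where the paper simply invokes the known fact that a pseudo- or para-K\"ahler manifold of dimension $2n\geq 4$ with constant curvature is flat, while you prove it directly from the commutation of $\RR(X,Y)$ with $\j$ evaluated on a Hermitian basis --- a self-contained addition that works equally in both the K\"ahler and para-K\"ahler cases.
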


\begin{remark}This result has been proved in the case $n=1$ and $\epsilon=1$ in \cite{GK1}.
\end{remark}

\begin{proof}[Proof of Proposition \ref{propoWeyl}]
Since the scalar curvature vanishes, we have 
\[
\widetilde{\text{W}} = \widetilde{\Rm} - \frac{1}{4n-2} \tric \kn \G,
\]
where $\kn$ denotes the Kulkarni--Nomizu product. 
Recall that  $\tric(\bar{X},\bar{Y})=0$ if one of the two vectors $\bar{X}$ and $\bar{Y}$ is vertical. Consequently 
\begin{eqnarray*}
\tric \kn \G (\bar{X},\bar{Y},\bar{Z},\bar{W})
&=& 2\left( \ric(\Pi \bar{X},\Pi \bar{Z}) \G(\bar{Y},\bar{W}) +\ric(\Pi \bar{Y},\Pi \bar{W}) \G(\bar{Y},\bar{W})  \right.
\\
&& \, \, \left. -\ric(\Pi \bar{X},\Pi \bar{W}) \G(\bar{Y},\bar{Z}) -\ric(\Pi \bar{Y},\Pi \bar{Z}) \G(\bar{X},\bar{W}) \right).
\end{eqnarray*}
The expression of the Weyl tensor follows easily.
 
In the case $n=1$ of a surface with Gaussian curvature $c$, we have  $\ric(X,Y)= c g(X,Y)$
and $\Rm(X,Y,Z,W)=c\big( g(X,Z)g(Y,W)-g(X,W)g(Y,Z)\big) $. Hence using Proposition \ref{curvature}, 
the expression of Weyl tensor simplifies and we get the claimed formula.
\end{proof}

\begin{proof}[Proof of Corollary \ref{coroWeyl}]
We first deal with the case $n=1$. 
Lemma~\ref{connection} implies that $\text{T}_1(X,Y,Z) = -2 c g(Z,X) Y$ when $\epsilon = 1$ 
(resp.\ $2 c g(Z,Y) X$ when $\epsilon = -1$). Therefore, if $\epsilon=1$,
\begin{eqnarray*}
\text{T}_2(X,Y,Z,W) &=& \nabla_X \text{T}_1 (Y,Z,W) - \nabla_Y \text{T}_1 (X,Z,W)
\\
&=& -2  (X.c) g(W,Y) Z + 2(Y.c) g(W,X) Z 
\\
&=& 2 g \Big( (Y.c) X - (X.c) Y,W \Big) Z,
\end{eqnarray*}
which vanishes if and only if $(X.c) Y = (Y.c) X$ for all vectors $X,Y$, i.e.\ the curvature $c$ is constant. 
Analogously, if $\epsilon=-1$,
\begin{eqnarray*}
\text{T}_2(X,Y,Z,W) &=& \nabla_X \text{T}_1 (Y,Z,W) - \nabla_Y \text{T}_1 (X,Z,W)
\\
&=& 2  (X.c)g(W,Z) Y -2 (Y.c)g( W, Z)X \\
&=& 2 \Big( (X.c)Y -(Y.c) X\Big)g(W,Z), 
\end{eqnarray*}
which again vanishes if and only if the curvature $c$ is constant.
\medskip

Assume now that $(T \M,\G)$ is conformally flat with $n \geq 2$. Thus in particular
\begin{multline*}
\widetilde{\text{W}} (X^h, Y^h, Z^h, W^v) 
\\
\begin{split}
= & - \Rm(X,Y,Z, \j  W) 
 \\
& -  \frac{1}{2n-1} \Big(-\ric(X,Z)g(Y,\j W) + \ric(Y,Z)g(X,\j W)   \Big)
\end{split}
\end{multline*}
vanishes, so
\[\Rm(X,Y,Z,\j W)=\frac{1}{2n-1} \Big(\ric(X,Z)g(Y,\j W) -\ric(Y,Z)g(X,\j W)   \Big).\]
(Observe that this equation always holds if $\M$ is a surface.)
Let us apply the symmetry property of the curvature tensor to this equation with $Z=X$ and $\j W=Y$,
 assuming furthermore that $X$ and $Y$ are two non-null vectors:
\begin{eqnarray*}
0 &=& (2n-1) \big( \text{Rm}(X,Y,X,Y)- \text{Rm}(Y,X,Y,X) \big)
\\
&=&\ric (X,X)g(Y,Y)-\ric(Y,X) g(X,Y) 
\\ 
&&-\ric (Y,Y)g(X,X)+\ric(X,Y) g(Y,X)
\\
&=&\ric (X,X)g(Y,Y)-\ric (Y,Y)g(X,X).
\end{eqnarray*}
Hence
\[
\frac{\ric (X,X)}{g(X,X)}=\frac{\ric (Y,Y)}{g(Y,Y)} \cdotp
\]
The set of non null vectors being dense in $T\M$, it follows by continuity that $g$ is Einstein. We deduce that
\begin{eqnarray*}
\Rm(X,Y,X,Y)&=&\frac{1}{2n-1} \Big(\ric(X,X)g(Y,Y) -\ric(Y,X)g(X,Y)   \Big)
\\ 
&=&c \Big( g(X,X)g(Y,Y)-g(X,Y)g(X,Y) \Big),
\end{eqnarray*}
so $g$ has constant curvature. But since $\M$ is K\"ahler and has dimension $2n \geq 4$, it must be flat.
\end{proof}

Finally, we recall the general result linking the Weyl tensor to the scalar curvature in dimension four: 
for a neutral pseudo-K\"ahler or para-K\"ahler metric, self-duality is equivalent to scalar flatness 
(see Theorem~\ref{thm-W-scal2} in annex). We can therefore conclude
\begin{corollary}
In dimension four $(n=1)$, the metric $\G$ is anti-self-dual if and only the curvature $c$ of $g$ is constant.
\end{corollary}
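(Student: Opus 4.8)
The plan is to deduce this corollary almost formally from three facts already at our disposal: the scalar-flatness of $\G$ (Proposition \ref{curvature}), the duality principle for neutral Kähler-type four-manifolds (Theorem \ref{thm-W-scal2}), and the conformal-flatness criterion in the surface case (Corollary \ref{coroWeyl}). The strategy is to show that, because $\G$ is always self-dual, anti-self-duality collapses to conformal flatness, which in turn is governed by the constancy of $c$.

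First I would recall from Proposition \ref{curvature} that $\G$ is scalar flat in every dimension, in particular for $n=1$. Since $(\J,\G)$ is a four-dimensional neutral pseudo-Kähler or para-Kähler structure, Theorem \ref{thm-W-scal2} applies and tells us that such a metric is self-dual exactly when its scalar curvature vanishes; consequently $\G$ is self-dual for every choice of $g$, with no assumption on $c$. Next I would invoke the standard orthogonal decomposition $\widetilde{\Wd} = \widetilde{\Wd}^+ + \widetilde{\Wd}^-$ of the Weyl tensor, regarded as an endomorphism of $\Lambda^2 = \Lambda^+ \oplus \Lambda^-$, into its self-dual and anti-self-dual parts. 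Self-duality of $\G$ means that one of these two halves vanishes identically; requiring $\G$ to be anti-self-dual amounts to the vanishing of the complementary half, so that for our metric anti-self-duality is equivalent to $\widetilde{\Wd} = 0$, i.e.\ to conformal flatness. Finally, Corollary \ref{coroWeyl} states that for $n=1$ the metric $\G$ is conformally flat if and only if $g$ has constant Gaussian curvature, and chaining these equivalences yields the claim.

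Since the argument is purely logical, there is no genuine computational obstacle; the only point demanding care is the orientation convention fixing which half of the Weyl tensor is labelled self-dual as opposed to anti-self-dual — a labelling that, as emphasised in the remark following the Main Theorem, is interchanged relative to the Riemannian Kähler case. I would therefore fix the orientation so that the conclusion of Theorem \ref{thm-W-scal2} reads \emph{self-dual} $\Leftrightarrow$ \emph{scalar flat}, consistently with the expression for $\widetilde{\Wd}$ obtained in Proposition \ref{propoWeyl}. Once this convention is pinned down the reasoning is convention-independent: whichever half $\G$ annihilates by virtue of being scalar flat, imposing the vanishing of the remaining half forces the entire Weyl tensor to vanish, and conformal flatness is then recognised through Corollary \ref{coroWeyl} as constancy of $c$.
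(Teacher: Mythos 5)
Your proposal is correct and follows exactly the paper's own argument: scalar flatness of $\G$ (Proposition \ref{curvature}) plus Theorem \ref{thm-W-scal2} gives self-duality, so anti-self-duality forces the whole Weyl tensor to vanish, and Corollary \ref{coroWeyl} identifies this with constancy of $c$ when $n=1$. Your extra care about the orientation convention is a reasonable (and harmless) elaboration of the same reasoning.
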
	

\begin{proof}
Thanks to proposition~\ref{curvature}, we know that $\G$ is scalar flat, hence self-dual 
($\Wd^-$ vanishes identically). In order for $\G$ to be also anti-self-dual, the Weyl tensor 
has to vanish completely, which amounts, following corollary \ref{coroWeyl}, to
having constant (sectional) curvature $c$ on $\M$.
\end{proof}


\subsection{The holomorphic sectional curvature of $(\J,\G)$}
\begin{proposition} \label{propHolol}
 $(\J, \G)$ has constant holomorphic sectional curvature  if and only if $g$ is flat.
\end{proposition}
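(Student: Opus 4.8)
The plan is to compute the holomorphic sectional curvature of $(\J,\G)$ straight from Proposition~\ref{curvature} and to detect flatness of $g$ through the way it depends on the vertical component of the chosen vector. Fix a point $(x,V)\in T\M$ and a vector $\bar X\in T_{(x,V)}T\M$, and abbreviate $p:=\Pi\bar X$, $q:=\K\bar X$. Since $\G=\Omega(\,\cdot\,,\J\,\cdot\,)$ is compatible with $\J$ (Corollary~\ref{compatible2}), one gets $\G(\bar X,\J\bar X)=0$ and $\G(\J\bar X,\J\bar X)=\epsilon\,\G(\bar X,\bar X)$, so the $\J$-invariant plane spanned by $\bar X$ and $\J\bar X$ is non-degenerate precisely when $\G(\bar X,\bar X)\neq 0$, in which case the holomorphic sectional curvature is, up to the usual sign,
\[
H(\bar X)=\frac{\widetilde{\Rm}(\bar X,\J\bar X,\J\bar X,\bar X)}{\epsilon\,\G(\bar X,\bar X)^2}.
\]
By Corollary~\ref{metric} and the $g$-skewness of $\j$ one has $\G(\bar X,\bar X)=2\,g(q,\j p)$, so the denominator is a homogeneous quadratic form in $q$ (for $p$ fixed), non-vanishing on a dense set of $q$.

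Next I would compute the numerator using $\J\bar X\simeq(\j p,\j q)$ (Proposition~\ref{para}) and the curvature formula of Proposition~\ref{curvature}. The $\mathrm{T}_2$-contribution gives $-\epsilon\,g(\mathrm{T}_2(p,\j p,p,V),p)$, which is independent of $q$. The four genuine curvature terms do \emph{not} cancel: invoking the K\"ahler symmetries $\Rm(\j a,\j b,c,d)=\epsilon\,\Rm(a,b,c,d)$ and $\Rm(a,b,\j c,\j d)=\epsilon\,\Rm(a,b,c,d)$ together with the pair symmetry, they combine into $2\epsilon\,\Rm(p,\j p,p,q)$, which is \emph{linear} in $q$. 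Thus
\[
\widetilde{\Rm}(\bar X,\J\bar X,\bar X,\J\bar X)=-\epsilon\,g(\mathrm{T}_2(p,\j p,p,V),p)+2\epsilon\,\Rm(p,\j p,p,q),
\]
an affine function of the vertical variable $q$. It is exactly this mismatch---affine numerator against quadratic denominator---that drives the proof.

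Then, assuming $(\J,\G)$ has constant holomorphic sectional curvature $c$, the requirement $\widetilde{\Rm}(\bar X,\J\bar X,\J\bar X,\bar X)=c\,\epsilon\,\G(\bar X,\bar X)^2$ becomes, for fixed $p,V$, a polynomial identity in $q$:
\[
\epsilon\,g(\mathrm{T}_2(p,\j p,p,V),p)-2\epsilon\,\Rm(p,\j p,p,q)=4c\,\epsilon\,g(q,\j p)^2 .
\]
Matching homogeneous degrees in $q$, the degree-two part forces $c=0$ (as $q\mapsto g(q,\j p)$ is a non-zero linear form for $p\neq 0$), while the degree-one part forces $\Rm(p,\j p,p,\cdot)=0$, hence $\Rm(p,\j p,\j p,p)=0$ for all $p$. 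In other words the base $(\M,\j,g)$ has identically vanishing holomorphic sectional curvature, and by the polarisation identity for pseudo- or para-K\"ahler curvature tensors (a formal consequence of the first Bianchi identity and the K\"ahler symmetry) this forces $\Rm\equiv0$, i.e.\ $g$ is flat. The converse is immediate: if $g$ is flat then $\mathrm{T}_1$, and hence $\mathrm{T}_2$, vanishes, so $\widetilde{\Rm}\equiv0$ by Proposition~\ref{curvature} and the holomorphic sectional curvature is the constant $0$.

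The main obstacle is the bookkeeping in the numerator: one must check that the four curvature terms assemble into the non-trivial multiple $2\epsilon\,\Rm(p,\j p,p,q)$ rather than cancelling, since it is this linear-in-$q$ term---and not the $\mathrm{T}_2$-term---that carries the curvature of the base. A secondary subtlety is the polarisation step in the para-K\"ahler case ($\epsilon=-1$), where the $\j$-eigenvectors are $g$-null; one should confirm that enough non-degenerate $\j$-invariant planes are available, which the density of $\{q:\ g(q,\j p)\neq0\}$ guarantees.
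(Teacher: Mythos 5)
Your proof is correct and takes essentially the same route as the paper: both rest on the curvature formula of Proposition~\ref{curvature}, observe that the numerator $\widetilde{\Rm}(\bar X,\J\bar X,\bar X,\J\bar X)$ is affine in the vertical component $q=\K\bar X$ with linear part $2\epsilon\,\Rm(p,\j p,p,q)$, force that linear part to vanish, and then invoke the standard polarization fact that a pseudo- or para-K\"ahler metric with identically vanishing holomorphic sectional curvature is flat (a fact the paper likewise uses without proof). The only difference is organizational: the paper evaluates the unnormalized quartic $\widetilde{\Hol}$ at the special vectors $X^v$, $X^h+X^v$ and $X^h+(\j X)^v$, whereas you match polynomial degrees in $q$ against the quadratic denominator $4\,g(q,\j p)^2$, which has the incidental merit of treating the normalization, the null-plane issue, and the vanishing of the constant $c$ more explicitly than the paper does.
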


\begin{proof}
Define the holomorphic sectional curvature tensor of $\G$ by 
$\widetilde{\Hol}(\bar{X}):=\widetilde{\Rm}(\bar{X},\J \bar{X},\bar{X}, \J \bar{X})$.
Writing any doubly tangent vector $\bar{X}$ as the sum of a horizontal and a vertical factor, we will compute 
$\widetilde{\Hol}(X^h+Y^v)$. We deduce from Proposition~\ref{curvature} that
$\widetilde{\Rm}$ vanishes whenever two or more entries are vertical. 
Hence, using the antisymmetric properties of the Riemann tensor w.r.t.\ the complex or para-complex structure, 
\begin{eqnarray*}
\widetilde{\Hol}(X^h+Y^v) &=& \widetilde{\Rm}(X^h,\j X^h,X^h, \j X^h)
\\
&& +  \widetilde{\Rm}(X^h,\j X^h,X^h,\j Y^v)+ \widetilde{\Rm}(X^h,\j X^h,Y^v,\j X^h)
\\
&& + \widetilde{\Rm}(X^h,\j Y^v,X^h,\j X^h)+ \widetilde{\Rm}(Y^v,\j X^h,X^h,\j X^h)
\\
&=& \widetilde{\Rm}(X^h,\j X^h,X^h,\j X^h) + 4 \widetilde{\Rm}(X^h,\j X^h,X^h,\j Y^v)
\\
&=& g(\text{T}_2 (X,\j X,X,V) - 4 \epsilon  \text{R}(X,Y)X , \j X) .
\end{eqnarray*}
In particular,
\begin{eqnarray*}
\widetilde{\Hol}(X^v)&=&0
\\
\widetilde{\Hol}(X^h+ X^v) &=&  g(\text{T}_2 (X,\j X,X,V), \j X)
\\
\widetilde{\Hol}(X^h+ (\j X)^v)&=&   g(\text{T}_2 (X,\j X,X,V), \j X) + 4 \epsilon \Hol(X).
\end{eqnarray*}
It follows from the first equation that if $\widetilde{\Hol}$ is constant, it must be zero. Hence, 
from the second and third equation we deduce that $\Hol$ must vanish, i.e.\ $g$ is flat.
\end{proof}


\section{Examples} \label{exPseudo}
The simplest examples where we may apply the construction above is where $(\M,\j,g,\omega)$ is the plane $\R^2$
equipped with  the flat metric $g:=dq_1^2 + \epsilon dq_2^2$ and the complex or para-complex structure $\j$ 
defined by $\j (\partial_{q_1}, \partial_{q_2})=(-\epsilon \partial_{q_2}, \partial_{q_1})$. In other words,  
$\R^2$ is identified with the complex plane $\C$ or the para-complex plane $\mathbb D$. We recall that 
$\mathbb D$, called the algebra of double numbers, is the two-dimensional real vector space $\R^2$ endowed 
with the commutative algebra structure whose product rule is given by
\[ 
(u , v) . (u' , v')= (uu'+vv', uv'+u'v).
\]
The number $(0,1)$, whose square is $(1,0)$ and not $(-1,0)$, will be denoted by $\tau$.

\medskip

We claim that in the complex case
$\epsilon=1$,  the structure $(\J,\G,\Omega)$ just constructed on $T\C$ is equivalent to that of the standard 
complex pseudo-Euclidean plane $(\C^2, \bar{\j},\<.,.\>_2, \omega_1)$, where $\bar{\j}$ is the canonical 
complex structure, $(z_1 =x_1+iy_1,z_2=x_2+iy_2)$ are the canonical coordinates and
\begin{eqnarray*}
 \<.,.\>_2&:=& -dx_1^2-dx_2^2+dx_2^2+dy_2^2 \\ 
\omega_1 &:=& -dx_1\wedge dy_1 + dx_2 \wedge dy_2.
\end{eqnarray*}
To see this, it is sufficient to consider the following  complex change of coordinates
$$\left\{  \begin{array}{lcl}z_1&:=&\frac{\sqrt{2}}{2}((p_1+i p_2)+i (q_1+iq_2))\\
 z_2&:=&\frac{\sqrt{2}}{2}(p_1+ i p_2- i(q_1+iq_2)), \end{array} \right.$$
 which preserves the symplectic form, since we have
\[\omega_1:=-dx_1\wedge dy_1 + dx_2\wedge dy_2=dq_1 \wedge dp_1 + dq_2 \wedge dp_2= \Omega,\]
where $\Omega$ is the canonical symplectic form of $T^*\C \simeq_g T \C$. The metric of a pseudo-K\"ahler 
structure being determined by the complex structure and the symplectic form through the formula 
$\G=\Omega(.,\J.)$, we have the required identification.

\medskip

Analogously, in the para-complex case $\epsilon=-1$, 
the structure  $(\J,\G,\Omega)$  constructed on $ T\mathbb D$ is equivalent to that of the standard 
para-complex plane ($\mathbb D^2$, $\bar{\j}$, $\<.,.\>_*$, $\omega_*$), where $\bar{\j}$ is the canonical 
para-complex structure, $(w_1 =u_1+\tau u_1,w_2=u_2+ \tau y_2)$ are the canonical coordinates and
\begin{eqnarray*}
 \<.,.\>_*&:=& du_1^2 - dv_1^2+du_2^2 - dv_2^2 \\ 
\omega_* &:=& du_1\wedge dv_1 + du_2\wedge dv_2.
\end{eqnarray*}
Here we have to be careful with the identification of $T^* \mathbb D$ with $T \mathbb D$: since the metric $g$ 
is $dq_1^2-dq_2^2$, we have $q_1:= dp_1 \simeq_{g} \partial_{p_1}$ and $q_2:=dp_2 \simeq_g - \partial_{q_2}$. 
Hence $\Omega^*=dq_1 \wedge dp_1 + dq_2 \wedge dp_2$ and $\Omega=dq_1 \wedge dp_1 - dq_2 \wedge dp_2$.
Introducing the change of para-complex coordinates
$$\left\{  \begin{array}{lcl}
w_1 &:=&\frac{\sqrt{2}}{2}((p_1+\tau p_2) - \tau  (q_1+ \tau q_2))
\\
w_2&:=&\frac{\sqrt{2}}{2}(\tau (p_1+ \tau p_2) + (q_1+\tau q_2)),
\end{array} \right.$$
we check that
\[ 
\omega_* = du_1\wedge dv_1 + du_2\wedge dv_2= dq_1 \wedge dp_1 - dq_2 \wedge dp_2 = \Omega,
\]
hence we obtain the identification between 
$ (T\mathbb D, \J ,\G, \Omega)$ and $(\mathbb D^2, \bar{\j}, \<.,.\>_*,\omega_*)$.
Of course the metrics considered in these two examples are flat.

\medskip

The next  simplest examples of pseudo-Riemannian surfaces are the two-dimensional space forms, namely the sphere $\S^2$, 
the hyperbolic plane $\H^2 :=\{x_1^2 + x_2^2 - x_3^2 = -1 \}$ and the de Sitter surface 
$d\S^2 :=\{x_1^2 + x_2^2 - x_3^2 = 1 \}$. Their tangent bundles enjoy a interesting geometric interpretation 
(see \cite{GK1}): the tangent bundle $T\S^2$ is canonically identified with the set of oriented lines 
of Euclidean three-space:
\[
L(\R^3) \ni \{ V+tx | \,t \in \R \} \simeq (x,V-\<V,x\>_0 x) \in T \S^2 .
\]
Analogously, the tangent bundle $T\H^2$ is canonically identified with the set of oriented negative (timelike) lines 
of three-space endowed with the metric $\<.,.\>_1:=dx_1^2 + dx_2^2 - dx_3^2$:
\[
\mathbb L_{1,-}^3 \ni \{ V+tx | \, t \in \R \} \simeq (x,V-\<V,x\>_1 x) \in T \H^2 ,
\]
Finally, the tangent bundle $Td\S^2$ is canonically identified with the set of oriented positive (spacelike) lines 
of three-space endowed with the metric $\<.,.\>_1$:
\[
\mathbb L_{1,+}^3 \ni \{ V+tx | \, t \in \R \} \simeq (x,V-\<V,x\>_1 x) \in T d\S^2 .
\]
Observe that the metric constructed on $T\S^2$ (resp.\ $T\H^2$) has non-negative (resp.\ non-positive) Ricci curvature. 


\appendix

\section{The Weyl tensor in the pseudo-K\"ahler or para-K\"ahler cases}

The Riemann curvature tensor $\Rm$ of a pseudo-Riemannian manifold $\N$
may be seen as a symmetric form $\RR$ on bivectors of $\Lambda^2 T\N$ 
(see \cite{Be} for references). Splitting $\RR$ along the eigenspaces 
$\Lambda^+ \oplus \Lambda^-$ of the Hodge operator $\ast$ on $\Lambda^2 T\N$, 
yields the following block decomposition
\[ \RR = \left(\begin{array}{cc}
     \Wd^+ + \frac{\text{Scal}}{12} I & \text{Z}\\
     \text{Z}^* & \Wd^- + \frac{\text{Scal}}{12} I
   \end{array}\right) \]
where $Z^*$ denotes the adjoint w.r.t. the induced metric on $\Lambda^2 T\N$,
so that $\Wd = \Wd^+ \oplus \Wd^-$, the Weyl tensor seen as a 2-form on $\Lambda^2 T\N$, 
is the traceless, Hodge-commuting part of the Riemann curvature operator $\RR$. 
Hence the following formula
\[ \Wd = \Rm - \frac{1}{2} \ric \kn g + \frac{\text{Scal}}{12} g
   \kn g \, . 
\]
If, additionally, $\N$ is a four dimensional K\"ahler manifold, then 
\begin{theorem}
$\Wd^+$ can be written as a multiple of the scalar curvature by 
a parallel non-trivial 2-form on $\Lambda^2 T\N$.
\end{theorem}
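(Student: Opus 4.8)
The plan is to produce the parallel form explicitly as a normalization of the K\"ahler form and to reduce the entire statement to a single curvature symmetry. Throughout, write $\omega = g(\j\,\cdot,\cdot)$ for the K\"ahler (resp.\ para-K\"ahler) form of $\N$; it is parallel, $\nabla\omega = 0$, because $\nabla\j = 0$. The curvature operator $\RR$ is viewed, as in the block decomposition stated above, as a symmetric $2$-form on $\Lambda^2 T\N$, and I will use $\RR|_{\Lambda^+} = \Wd^+ + \frac{\text{Scal}}{12}\,I$. The first task is a frame computation: in a (para-)Hermitian pseudo-orthonormal frame $(e_a,\j e_a)$ I would evaluate the Hodge operator $\ast$ and check that $\Lambda^+$ splits as $\mathbb R\,\omega\oplus V$, where $V$ is the two-dimensional space of \emph{anti-invariant} bivectors (those of type $(2,0)+(0,2)$ in the complex case, and the para-complex analogue when $\epsilon=-1$). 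The indefiniteness of the metric is exactly what exchanges the roles of $\Lambda^+$ and $\Lambda^-$ relative to the Riemannian K\"ahler case, as recorded in the Remark; this is the step that locates $\omega$ in $\Lambda^+$ and hence explains why the conclusion concerns $\Wd^+$.

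Next I would record the K\"ahler curvature symmetry. Differentiating $\j^2 = -\epsilon\,\mathrm{Id}$ together with $\nabla\j = 0$ gives $\text{R}(X,Y)\circ\j = \j\circ\text{R}(X,Y)$, and combined with the pair symmetry of $\Rm$ this yields $\Rm(\j X,\j Y,Z,T) = \epsilon\,\Rm(X,Y,Z,T)$. Introducing the self-adjoint involution $\sigma$ of $\Lambda^2 T\N$ defined by $\sigma(X\wedge Y) = \j X\wedge\j Y$ (so that $\sigma^2 = \mathrm{Id}$ and $\sigma\,\omega = \epsilon\,\omega$), this symmetry is precisely the operator identity $\RR\,\sigma = \epsilon\,\RR$. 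Evaluating it on the $(-\epsilon)$-eigenspace of $\sigma$, which is exactly the anti-invariant space $V\subset\Lambda^+$ found in the first step, forces $\RR|_V = 0$. This is the algebraic heart of the argument, and it is immediate once the symmetry is in hand; notably no Bianchi identity is required.

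It then remains to assemble $\Wd^+$. Since $\RR|_{\Lambda^+} = \Wd^+ + \frac{\text{Scal}}{12}\,I$ and $\RR$ kills $V$, the form $\Wd^+$ is block-diagonal with respect to $\Lambda^+ = \mathbb R\,\omega\oplus V$ (the mixed block vanishes because $V\perp\omega$), it equals $-\frac{\text{Scal}}{12}$ times the induced metric on $V$, and, by the tracelessness of $\Wd$ on $\Lambda^+$, takes the value $\tfrac{\text{Scal}}{6}$ (up to the sign of $\langle\omega,\omega\rangle$) on the $\omega$-line. Collecting these contributions one obtains
\[
\Wd^+ \;=\; \text{Scal}\cdot\Psi, \qquad \Psi \;=\; \tfrac{1}{12}\bigl( 3\,\widehat\omega\otimes\widehat\omega - g_{|\Lambda^+} \bigr),
\]
with $\widehat\omega = \omega/|\omega|$. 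The form $\Psi$ is manifestly non-trivial, and it is parallel because both $\widehat\omega$ (by the K\"ahler condition) and the induced metric $g_{|\Lambda^+}$ (because $\nabla$ is metric and preserves $\Lambda^+$) are parallel; this is the assertion of the theorem.

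I expect the real obstacle to be neither of the two algebraic steps but the frame and convention bookkeeping of the first paragraph: verifying $\ast\omega = \omega$ and $\ast|_V = +\mathrm{Id}$ uniformly for $\epsilon = \pm 1$ in neutral signature, and tracking the sign of $\langle\omega,\omega\rangle$ through the trace relation so that the numerical constant in $\Psi$ is correct in every case. The structural conclusion $\Wd^+ = \text{Scal}\cdot\Psi$ with $\Psi$ parallel and non-trivial is robust to these signs; only the explicit normalization of $\Psi$ depends on them.
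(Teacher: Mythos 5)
Your argument is correct for the statement as given (a four-dimensional Riemannian K\"ahler manifold), and it takes a genuinely different route from the paper: for this theorem the paper offers no proof at all, citing Prop.~2 of Derdzi\'nski \cite{De}, and its appendix proves the neutral analogues (Theorem~\ref{thm-W-scal2}) by brute-force frame computation --- tables of curvature coefficients reduced by the $\j$-symmetries, an explicit Hodge eigenbasis $E_a^{\pm}$, entry-by-entry evaluation of $\Wd$, and an explicit appeal to first Bianchi to rewrite $\RR_{11'22'}$ as $\RR_{1212}+\RR_{12'12'}$. Your route isolates the mechanism instead: the involution $\sigma(X\wedge Y)=\j X\wedge \j Y$ splits $\Lambda^+=\mathbb{R}\,\omega\oplus V$, the K\"ahler symmetry $\Rm(\j X,\j Y,Z,T)=\Rm(X,Y,Z,T)$, read as the operator identity $\RR\,\sigma=\RR$, kills $\RR$ on the anti-invariant part $V$, and tracelessness of $\Wd^+$ then pins the constant, giving $\Wd^+=\mathrm{Scal}\cdot\frac{1}{12}\bigl(3\,\hat\omega\otimes\hat\omega-g|_{\Lambda^+}\bigr)$ with eigenvalues $(\mathrm{Scal}/6,-\mathrm{Scal}/12,-\mathrm{Scal}/12)$; this agrees with the paper's matrix up to its declared factor-$2$ normalization of the tables. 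Your version is shorter, explains \emph{why} the anti-invariant directions are pure trace, and hands you the parallel form $\Psi$ explicitly; what the paper's computational route buys is precisely the signature and orientation bookkeeping that you defer --- determining which of $\Wd^{\pm}$ contains $\omega$ in each of the three cases, and exhibiting that the other block is not controlled by the scalar curvature.

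Two caveats. First, the claim that ``no Bianchi identity is required'' is true only of the step $\RR|_V=0$: the separate tracelessness of $\Wd^+$ (equivalently, the fact that $\RR|_{\Lambda^+}$ and $\RR|_{\Lambda^-}$ have the same trace $\mathrm{Scal}/4$), which you use to fix the constant, is exactly where first Bianchi is hidden. It is legitimate to import it from the stated block decomposition of $\RR$, but it is not avoided. Second, the verification you flag as the ``real obstacle,'' namely $\ast\omega=\omega$ uniformly for $\epsilon=\pm1$ in neutral signature, would in fact fail: with the paper's orientation conventions the K\"ahler form of a neutral pseudo- or para-K\"ahler manifold is \emph{anti}-self-dual (it is proportional to $E_1^-$), which is precisely why Theorem~\ref{thm-W-scal2} concerns $\Wd^-$. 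This contradicts that sentence of yours but not your argument: you acknowledge the exchange of $\Lambda^+$ and $\Lambda^-$ elsewhere, your algebraic core ($\sigma$-splitting, $\RR\sigma=\epsilon\RR$, trace) carries over verbatim to the subbundle containing $\omega$, and the statement under review concerns the definite case, where $\omega$ is indeed self-dual.
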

\noindent
See Prop.~2 in \cite{De} for a proof and the explicit formula for the tensor involved. 
We do not need it explicitly since we are only interested in the following
\begin{corollary}
$(\N, g, \j)$ is \emph{anti-self-dual} \emph{($\Wd^+ = 0$)} if and only if 
the scalar curvature vanishes.
\end{corollary}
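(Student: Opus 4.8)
The plan is to read off the corollary directly from the preceding Theorem. That result expresses the self-dual part of the Weyl tensor as $\Wd^+ = \text{Scal} \cdot \Phi$, where $\Phi$ is a fixed parallel, non-trivial tensor on $\Lambda^2 T\N$. Since both sides are tensor fields, this is an identity valid at every point of $\N$, and the whole statement reduces to analysing this pointwise product.

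First I would record that $\Phi$ is nowhere vanishing. Being parallel, $\Phi$ is carried to itself by parallel transport along any curve; as parallel transport is a linear isomorphism on each fibre, a value $\Phi_p \neq 0$ at one point propagates to $\Phi_q \neq 0$ at every point $q$. The non-triviality hypothesis guarantees that such a point exists, so $\Phi_p \neq 0$ throughout $\N$. This is the only structural input required beyond the Theorem itself.

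With that in place the equivalence is immediate and splits into the two obvious implications. If $\text{Scal} \equiv 0$, then $\Wd^+ = 0 \cdot \Phi = 0$, so $\N$ is anti-self-dual. Conversely, if $\Wd^+ = 0$, then at each point $p$ we have $\text{Scal}(p)\,\Phi_p = 0$ in the fibre; since $\Phi_p \neq 0$, the scalar $\text{Scal}(p)$ must vanish, whence $\text{Scal} \equiv 0$. There is essentially no obstacle once the Theorem is granted---the sole point deserving a word of justification is the nowhere-vanishing of the parallel tensor $\Phi$, which is a standard consequence of parallelism. All the genuine content sits in the Theorem, whose explicit computation of $\Wd^+$ for a four-dimensional pseudo- or para-K\"ahler metric is referred to \cite{De}.
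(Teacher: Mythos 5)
Your proposal is correct and matches the paper's own reasoning: the paper states this corollary as an immediate consequence of the preceding theorem (Derdzi\'nski's result that $\Wd^+$ equals the scalar curvature times a parallel non-trivial tensor), giving no further argument. The one detail you make explicit---that a parallel tensor which is non-trivial must be nowhere vanishing, since parallel transport acts by linear isomorphisms on fibres---is exactly the point the paper leaves tacit, and your two implications then follow as stated.
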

\noindent
The result extends to the two cases considered in this article: (1) neutral
pseudo-K\"ahler manifolds and (2) para-K\"ahler manifolds, with a slight
twist: $\Wd^+$ is replaced by $\Wd^-$. Precisely:
 
\begin{theorem} \label{thm-W-scal2}
  Let $(\N, g, \j )$ be a four dimensional manifold endowed with a pseudo-K\"ahler 
  neutral metric (respectively a para-K\"ahler metric, necessarily neutral). 
  Then the Weyl tensor $\Wd$ commutes with the Hodge operator and 
  $\N$ is {\emph{self-dual}} ($\Wd^- = 0$) if and only if the scalar curvature vanishes.
\end{theorem}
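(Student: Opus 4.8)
The plan is to reduce the statement to the Riemannian Kähler case recalled above (Derdzinski, see \cite{De}), keeping track of the single sign change that, in neutral signature, interchanges the roles of the self-dual and anti-self-dual bundles. First I would record that in dimension four the Hodge operator on $\Lambda^2 T\N$ squares to $+\mathrm{Id}$ both in the Riemannian and in the neutral case (for signature $(2,2)$ the index is even, so $\mathrm{sgn}(\det g)=+1$); hence $\Lambda^2 = \Lambda^+ \oplus \Lambda^-$ splits into real $\pm 1$-eigenbundles and the Singer--Thorpe block decomposition displayed in the appendix holds verbatim. In particular $\Wd = \Wd^+ \oplus \Wd^-$ is block-diagonal, which is exactly the assertion that $\Wd$ commutes with $*$; the remaining substantive claim is $\Wd^- = 0 \iff \text{Scal} = 0$.

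The decisive step is to compute the $*$-type of the Kähler form $\omega$. Working in an adapted frame $e_1, \j e_1, e_2, \j e_2$ in the pseudo-Kähler case (resp.\ in a real eigenframe of $\j$ in the para-Kähler case) one writes, up to relabelling, $\omega = e^1 \wedge e^2 - e^3 \wedge e^4$, with the two summands lying in $g$-blocks of opposite sign. A direct application of $*(e^i \wedge e^j) = \epsilon_i \epsilon_j\, e^k \wedge e^l$ then gives $*\omega = -\omega$, so that $\omega \in \Lambda^-$ and $|\omega|^2 \ne 0$ --- in contrast with the Riemannian case, where $\omega \in \Lambda^+$. This sign is the entire source of the twist $\Wd^+ \leftrightarrow \Wd^-$ announced in the theorem.

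Next I would import the Kähler curvature identities, which survive into the pseudo- and para-Kähler settings up to the factor $\epsilon$: $\omega$ is parallel, so the splitting is $\nabla$-parallel; the curvature takes values in the (pseudo/para) unitary algebra, i.e.\ its image lies in the real $(1,1)$-forms, whence $\RR$ annihilates $\mathrm{Re}(\Lambda^{2,0}\oplus\Lambda^{0,2})$; and $\RR\,\omega$ is the Ricci form $\rho = \ric(\j\,\cdot,\cdot)$ (up to the usual constant). A short computation in the adapted frame shows that in neutral signature $\mathrm{Re}(\Lambda^{2,0}\oplus\Lambda^{0,2}) \subseteq \Lambda^-$ while the primitive real $(1,1)$-forms fill out $\Lambda^+$; thus $\Lambda^- = \R\,\omega \oplus \mathrm{Re}(\Lambda^{2,0}\oplus\Lambda^{0,2})$. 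On this splitting the $\Lambda^-$-block $\mathrm{pr}^- \circ \RR \circ \mathrm{pr}^-$ vanishes on $\mathrm{Re}(\Lambda^{2,0}\oplus\Lambda^{0,2})$ and is therefore a multiple of the projection $\mathrm{pr}_\omega$ onto the line $\R\,\omega$; taking traces and using $\mathrm{tr}\,\Wd^- = 0$ fixes that multiple, giving $\Wd^- = \tfrac{\text{Scal}}{12}(3\,\mathrm{pr}_\omega - \mathrm{Id}_{\Lambda^-})$. Since $|\omega|^2 \ne 0$ makes $\mathrm{pr}_\omega$ well defined and $3\,\mathrm{pr}_\omega - \mathrm{Id} \ne 0$, this yields $\Wd^- = 0 \iff \text{Scal} = 0$, i.e.\ self-duality is equivalent to scalar flatness.

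The hard part will be the para-Kähler case, where the $\pm 1$-eigenspaces of $\j$ are $g$-null, so the type decomposition must be built directly from the real eigenbundles of $\j$ rather than by complexification, and each Kähler identity carries the sign $\epsilon = -1$. Propagating these signs consistently through the three inputs --- the value of $*\omega$, the placement of $\mathrm{Re}(\Lambda^{2,0}\oplus\Lambda^{0,2})$ relative to $\Lambda^\pm$, and the identity $\RR\,\omega = \rho$ --- so that $*\omega = -\omega$ and the curvature-is-type-$(1,1)$ property hold uniformly for $\epsilon = \pm 1$ is the only delicate bookkeeping; once it is in place, the conclusion follows from the verbatim Riemannian argument.
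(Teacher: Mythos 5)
Your proposal is correct, and it reaches the theorem by a genuinely different route than the paper. The paper's proof is a bare-hands computation: it fixes an adapted orthonormal frame, tabulates the coefficients of $\RR$ and $\Wd$ using the $\j$-symmetries of the curvature, verifies entry by entry that the $\Wd(\Lambda^+,\Lambda^-)$ block vanishes, and then evaluates the three independent $\Wd^-$ entries to get $\Wd^-=\text{Scal}\cdot\mathrm{diag}(1/3,1/6,1/6)$ (resp.\ its para analogue). You instead transplant Derdzi\'nski's conceptual argument: the one-line computation $*\omega=-\omega$ puts the K\"ahler form in $\Lambda^-$ in neutral signature, with the primitive real $(1,1)$-forms filling $\Lambda^+$ and $\mathrm{Re}(\Lambda^{2,0}\oplus\Lambda^{0,2})\subset\Lambda^-$ (this matches the paper's frames: $E_1^-\propto\omega$, while $E_2^-,E_3^-$ are the $\j$-anti-invariant directions and all $E_a^+$ are of type $(1,1)$, with the corresponding statement for the para case); combined with the type-$(1,1)$ property of (para-)K\"ahler curvature and the symmetry of $\RR$, this forces the $\Lambda^-$-block of $\RR$ to equal $\lambda\,\mathrm{pr}_\omega$, and tracelessness of $\Wd^-$ fixes $\lambda$, giving $\Wd^-= c\,\text{Scal}\,(3\,\mathrm{pr}_\omega-\mathrm{Id}_{\Lambda^-})$ with $c\neq 0$ universal. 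Your approach buys a structural explanation of the $\Wd^+\leftrightarrow\Wd^-$ twist (it is precisely the anti-self-duality of $\omega$ in neutral signature) and the eigenvalue structure of $\Wd^-$ in one stroke, and treats both cases uniformly through the sign $\epsilon$; the paper's computation buys self-containedness — no appeal to the Singer--Thorpe decomposition in indefinite signature or to type decompositions, which in the para case must indeed be built from the $\pm1$-eigenspaces of the induced involution on $\Lambda^2 T\N$ rather than by complexification, exactly the point you flag — and it also exhibits concretely that $\Wd^+$ is \emph{not} a multiple of the scalar curvature. Two harmless discrepancies: your coefficient $\tfrac{\text{Scal}}{12}$ differs from the paper's matrices by the factor-$2$ normalization the paper itself flags in its choice of $\RR$, and your derivation of Hodge-commutation from the general block decomposition (valid for any neutral metric) is legitimate, whereas the paper re-derives that part by hand from the K\"ahler symmetries; neither affects the equivalence $\Wd^-=0\iff\text{Scal}=0$.
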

\noindent
The result for neutral pseudo-K\"ahler manifolds is probably known and 
relates to representation theory (see~\cite{Be} for introduction and references), 
but since we could not find an explicit
proof in the literature{\footnote{On the contrary, some authors seem to imply
that scalar flatness is equivalent to anti-self-duality, see~\cite{DW}).
However this contradiction could possibly come from a
different choice of orientation, which would exchange self-dual with
anti-self-dual.}}, we will give a simple one below. To our knowledge, the
proof for the para-K\"ahler case is new (albeit similar).


\subsection{The pseudo-K\"ahler case}

We will write explicitly the Weyl tensor in a given positively oriented
orthonormal frame, denoted by $( e_1, e_{1'}, e_2, e_{2'})$, where $e_{1'} =
\j e_1$, $e_{2'} = \j e_2$, $g ( e_1) = g ( e_{1'}) = - 1$ and $g ( e_2) = g (
e_{2'}) = + 1$. (For brevity, $g(X)$ denotes the norm $g(X,X)$.) 
The pseudo-metric $g$ extends to bivectors, has signature $(
2, 4)$, and will be again denoted by $g$: $g ( e_a \wedge e_b) = g
( e_a) g ( e_b) - g ( e_a, e_b)^2 = g ( e_a) g ( e_b)$, so that $\mathcal{B}=
( e_1 \wedge e_{1'}, e_1 \wedge e_2, e_1 \wedge e_{2'}, e_{1'} \wedge e_2,
e_{1'} \wedge e_{2'}, e_2 \wedge e_{2'})$ is an orthonormal frame of
$\Lambda^2$, with $g ( e_a \wedge e_b) = - 1$, except for $g ( e_1 \wedge
e_{1'}) = g ( e_2 \wedge e_{2'}) = + 1$. (Note that the other convention,
taking $- g$ does not change the induced metric on $\Lambda^2$.)

Since the volume $e_1 \wedge e_{1'} \wedge e_2 \wedge e_{2'}$ is positively
oriented, we construct an orthonormal eigenbasis for the Hodge star on $\Lambda^2 T\N$:
\[ \left\{ \begin{array}{l}
     E_1^{\pm} = \frac{\sqrt{2}}{2}  ( e_1 \wedge e_{1'} \pm e_2 \wedge
     e_{2'})\\
     E_2^{\pm} = \frac{\sqrt{2}}{2}  ( e_1 \wedge e_2 \pm e_{1'} \wedge
     e_{2'})\\
     E_3^{\pm} = \frac{\sqrt{2}}{2}  ( e_1 \wedge e_{2'} \mp e_{1'} \wedge
     e_2)
   \end{array} \right. \]
so that $\Lambda^{\pm}$ is generated by $E_1^{\pm}, E_2^{\pm}, E_3^{\pm}$.

\medskip

The K\"ahler condition implies 
\[\Rm ( \j X, \j Y, Z, T) = \Rm ( X, Y, Z, T) = \Rm ( X, Y, \j Z, \j T), 
\]
because $\j $ is isometric and parallel. The matrix of the symmetric 2-form $\RR$ 
in the orthonormal frame $\mathcal{B}$ is
\[ \begin{array}{|c|c|c|c|c|c|c|}
     \hline
     & e_{11'} & e_{12} & e_{12'} & e_{1'2} & e_{1'2'} & e_{22'} \\
     \hline
     e_{11'} & \RR_{11' 11'} & \RR_{11' 12} & \RR_{11' 12'} 
     & \begin{array}{c} \RR_{11' 1' 2} = \\ - \RR_{11' 12'} \end{array}
 & \begin{array}{c} \RR_{11' 1' 2'} = \\ \RR_{11' 12} \end{array}
 & \RR_{11' 22'}\\
     \hline
     e_{12} &  & \RR_{1212} & \RR_{1212'} & \begin{array}{c} \RR_{121' 2}= \\ - \RR_{1212'} \end{array} &
     \begin{array}{c} \RR_{131' 2'} = \\ \RR_{1212} \end{array} & \RR_{1222'}\\
     \hline
     e_{12'} &  &  & \RR_{12' 12'} 
     & \begin{array}{c} \RR_{12' 1' 2} = \\ - \RR_{12' 12'} \end{array} &
     \begin{array}{c} \RR_{12' 1' 2'} = \\ \RR_{1212'} \end{array} & \RR_{12' 22'}\\
     \hline
     e_{1'2} &  &  &  & \begin{array}{c} \RR_{1' 21' 2} = \\ \RR_{12' 12'} \end{array} 
     & \begin{array}{c} \RR_{1' 21' 2'} = \\ - \RR_{1212'} \end{array} 
     & \begin{array}{c} \RR_{1' 222'} = \\ - \RR_{12' 22'} \end{array} \\
     \hline
     e_{1'2'} &  &  &  &  
     & \begin{array}{c} \RR_{1' 2' 1' 2'} = \\ \RR_{1212} \end{array} 
     & \begin{array}{c} \RR_{1' 2'22'} = \\ \RR_{1222'} \end{array}\\
     \hline
     e_{22'} &  &  &  &  &  & \RR_{22' 22'}\\
     \hline
   \end{array} 
\]
where $e_{ab}$ stands for $e_a \wedge e_b$, for greater legibility.
We have written the matrix as a table for clarity and to make symmetries more
obvious, and because $\RR$ is symmetric we need only write half the matrix. We
have used the internal symmetries of $\RR$, to choose among equivalent
coefficients the ones lowest in the lexicographic order of the indices.

\medskip

The Weyl tensor satisfies {\emph{some}} of the $\j $-symmetries of $\RR$: indeed
\begin{eqnarray*}
  \ric ( \j X, \j Y) & = & \sum_{i=1}^4 g ( e_i) \Rm ( \j X, e_i, \j Y, e_i) =
  \sum_{i=1}^4 g ( e_i) \Rm ( X, \j e_i, Y, \j e_i)\\
  & = & \sum_{i=1}^4 g ( \j e_i) \Rm ( X, \j e_i, Y, \j e_i) = \ric ( X, Y)
\end{eqnarray*}
because $( \j e_i)$ is again an orthonormal basis. In particular, this
invariance implies $r_{11'} = \ric ( e_1, e_{1'}) = r_{1' 1} = -
r_{11'}$, so $r_{11'}$ vanish (and so does $r_{22'}$). For the
Kulkarni--Nomizu product,
\begin{equation*} \begin{split}
  \ric \kn g ( \j X, Y, Z, T)  = 
  & 
  \ric (\j X, Z) g (Y, T) + \ric (Y, T) g (\j X, Z) 
  \\
  & - \ric (\j X, T) g (Y, Z) - \ric (Y,Z) g (\j X, T)
  \\
   = & - \ric (X, \j Z) g (\j Y, \j T) - \ric (\j Y, \j T) g (X, \j Z) 
   \\
   & + \ric (X, \j T) g (\j Y, \j Z) + \ric (\j Y, \j Z) g (X, \j T)
   \\
   = & - \ric \kn g ( X, \j Y, \j Z, \j T)
\end{split} 
\end{equation*}
so
\[ \ric \kn g ( \j X, \j Y, Z, T) = - \ric \kn g ( X, \j ^2 Y,
   \j Z, \j T) = \ric \kn g ( X, Y, \j Z, \j T) \, .
\]
Hence the following symmetries (fewer than for $\Rm$) in the
coefficients of $\ric \kn g$, $g \kn g$ and $\Rm$, and
therefore $\Wd$:
\[ \begin{array}{|c|c|c|c|c|c|c|}
     \hline
     & e_{1 1'} & e_{1} \wedge e_{2} & e_{1 2'} & e_{1'} \wedge e_{2} & e_{1' 2'} & e_{2 2'}\\
     \hline
     e_{1 1'} & \Wd_{11' 11'} & \Wd_{11' 12} & \Wd_{11' 12'} 
     & \begin{array}{c} \Wd_{11' 1' 2} = \\ - \Wd_{11' 12'} \end{array} 
     & \begin{array}{c} \Wd_{11' 1' 2'} \\ = \Wd_{11' 12} \end{array}
     & \Wd_{11' 22'}\\
     \hline
     e_{1 2} &  & \Wd_{1212} & \Wd_{1212'} & \Wd_{121' 2} & \Wd_{121' 2'} &
     \Wd_{1222'}\\
     \hline
     e_{1 2'} &  &  & \Wd_{12' 12'} & \Wd_{12' 1' 2} 
     & \begin{array}{c} \Wd_{12' 1' 2'} = \\  - \Wd_{121' 2} \end{array} & \Wd_{12' 22'}\\
     \hline
     e_{1' 2} &  &  &  & \begin{array}{c} \Wd_{1' 21' 2} \\ = \Wd_{12' 12'} \end{array} 
     & \begin{array}{c} \Wd_{1' 21' 2'} = \\ - \Wd_{1212'}  \end{array}
     & \begin{array}{c} \Wd_{1' 222'} = \\ - \Wd_{12' 22'} \end{array} \\
     \hline
     e_{1' 2'} &  &  &  &  
     & \begin{array}{c} \Wd_{1' 2' 1' 2'} \\ = \Wd_{1212} \end{array} 
     & \begin{array}{c} \Wd_{1' 2' 22'} \\ = \Wd_{1222'}  \end{array} \\
     \hline
     e_{2 2'} &  &  &  &  &  & \Wd_{22' 22'}\\
     \hline
   \end{array} \]
Expanding on the above eigenbasis of $\Lambda^+ \oplus \Lambda^-$ (which differs
from the one in the positive definite case) yields the following Weyl tensor
coefficients, which we have simplified using the symmetries above (up to a
factor $1 / 2$ due to normalization):
\[ \begin{array}{|c|c|c|c|}
     \hline
     & E_1^+ & E_2^+ & E_3^+\\
     \hline
     E_1^+ & \Wd_{11' 11'} + \Wd_{22' 22'} + 2 \Wd_{11' 22'} & 2 ( \Wd_{11' 12} +
     \Wd_{1222'}) & 2 ( \Wd_{11' 12'} + \Wd_{12' 22'})\\
     \hline
     E_2^+ &  & 2 ( \Wd_{1212} + \Wd_{121' 2'}) & 2 ( \Wd_{1212'} - \Wd_{121' 2})\\
     \hline
     E_3^+ &  &  & 2 ( \Wd_{12' 12'} - \Wd_{12' 1' 2})\\
     \hline
     E_1^- &  &  & \\
     \hline
     E_2^- &  &  & \\
     \hline
     E_3^- &  &  & \\
     \hline
   \end{array} \]
\[ \begin{array}{|c|c|c|c|}
     \hline
     & E_1^- & E_2^- & E_3^-\\
     \hline
     E_1^+ & \Wd_{11' 11'} - \Wd_{22' 22'} & 0 & 0\\
     \hline
     E_2^+ & 2 ( \Wd_{11' 12} - \Wd_{1222'}) & 0 & 0\\
     \hline
     E_3^+ & 2 ( \Wd_{11' 12'} - \Wd_{12' 22'}) & 0 & 0\\
     \hline
     E_1^- & \Wd_{11' 11'} + \Wd_{22' 22'} - 2 \Wd_{11' 22'} & 0 & 0\\
     \hline
     E_2^- &  & 2 ( \Wd_{1212} - \Wd_{121' 2'}) & 2 ( \Wd_{1212'} + \Wd_{121' 2})\\
     \hline
     E_3^- &  &  & 2 ( \Wd_{12' 12'} + \Wd_{12' 1' 2})\\
     \hline
   \end{array} \]
(Again only half the coefficients are written down.)
Further simplifications come from computing $\Wd$, and using
\begin{eqnarray*}
  \text{Scal} & = & - r_{11} - r_{1' 1'} + r_{22} + r_{2' 2'} = 2 ( r_{22} -
  r_{11})\\
  & = & 2 ( - ( - \RR_{11' 11'} + \RR_{1212} + \RR_{12' 12'}) + ( - \RR_{1212} -
  \RR_{1' 21' 2} + \RR_{22' 22'}))\\
  & = & 2 ( \RR_{11' 11'} - 2 ( \RR_{1212} + \RR_{12' 12'}) + \RR_{22' 22'}) \, .
\end{eqnarray*}
First prove that the Hodge star commutes with $\Wd$ by considering $\Wd (
\Lambda^+, \Lambda^-)$:
\begin{eqnarray*}
  \Wd_{11' 11'} & = & \RR_{11' 11'} + \frac{1}{2}  ( r_{11} + r_{1' 1'}) +
  \frac{\text{Scal}}{6} = \RR_{11' 11'} + r_{11} + \frac{\text{Scal}}{6} 
  \\
  & = & \RR_{1212} + \RR_{12' 12'} + \frac{\text{Scal}}{6}
\end{eqnarray*}
\begin{eqnarray*}
\Wd_{22' 22'} & = & \RR_{22' 22'} - \frac{1}{2}  ( r_{22} + r_{2' 2'}) +
   \frac{\text{Scal}}{6} = \RR_{22' 22'} - r_{22} + \frac{\text{Scal}}{6} 
\\
   & = &  \RR_{1212} + \RR_{12' 12'} + \frac{\text{Scal}}{6}
\end{eqnarray*}
so that $\Wd_{11' 11'} - \Wd_{22' 22'} = 0$. Similarly
\[ \Wd_{11' 12} = \RR_{11' 12} + \frac{r_{1' 2}}{2}, \hspace{1em} \Wd_{1222'} =
   \RR_{1222'} + \frac{r_{12'}}{2} = \RR_{1222'} - \frac{r_{1' 2}}{2} \]
so
\[ \Wd_{11' 12} - \Wd_{1222'} = \RR_{11' 12} - \RR_{1222'} + r_{1' 2} = 0 \]
\[ \Wd_{11' 12'} = \RR_{11' 12'} + \frac{r_{1' 2'}}{2} = \RR_{11' 12'} +
   \frac{r_{12}}{2}, \hspace{1em} \Wd_{12' 22'} = \RR_{12' 22'} -
   \frac{r_{12}}{2}, \]
\[ \Wd_{11' 12'} - \Wd_{12' 22'} = \RR_{11' 12'} - \RR_{12' 22'} + r_{12} = 0 . \]
That proves that $\Wd$ is block-diagonal. 

\medskip

The $\Wd^-$ term satisfies
\begin{eqnarray*}
  \Wd_{11' 11'} + \Wd_{22' 22'} - 2 \Wd_{11' 22'} & = & \RR_{11' 11'} + r_{11} +
  \RR_{22' 22'} - r_{22} + \frac{\text{Scal}}{3}
  \\
  && - 2 \RR_{11' 22'}
  \\
  & = & \RR_{11' 11'} + \RR_{22' 22'} - 2 \RR_{11' 22'} - \frac{\text{Scal}}{6}
  \\
  & = & \RR_{11' 11'} + \RR_{22' 22'} - 2 ( \RR_{1212} + \RR_{12' 12'})
  \\
  && - \frac{\text{Scal}}{6} 
  \\
  & = & \frac{\text{Scal}}{2} - \frac{\text{Scal}}{6} =
  \frac{\text{Scal}}{3}
\end{eqnarray*}
using the first Bianchi identity (and the invariance of $\Rm$):
\[ \RR_{11' 22'} = - \RR_{1' 212'} - \RR_{211' 2'} = \RR_{12' 12'} + \RR_{1212} . \]
\begin{eqnarray*}
  \Wd_{1212} - \Wd_{121' 2'} & = & \RR_{1212} + \frac{r_{22} - r_{11}}{2} -
  \frac{\text{Scal}}{6} - \RR_{121' 2'} = \frac{\text{Scal}}{4} -
  \frac{\text{Scal}}{6} 
  \\
  & = & \frac{\text{Scal}}{12}
\end{eqnarray*}
\begin{eqnarray*}
  \Wd_{12' 12'} + \Wd_{12' 1' 2} & = & \RR_{12' 12'} + \frac{\text{Scal}}{4} -
  \frac{\text{Scal}}{6} + \RR_{12' 1' 2} = \frac{\text{Scal}}{12}
\end{eqnarray*}
\[ \Wd_{1212'} + \Wd_{121' 2} = \RR_{1212'} + \frac{r_{22'}}{2} + \RR_{121' 2} -
   \frac{r_{11'}}{2} = \frac{1}{2}  ( r_{22'} - r_{11'}) = 0 . \]
Finally,
\[ \Wd^- = \text{Scal} \left(\begin{array}{ccc}
     1 / 3 &  & \\
    & 1 / 6 & \\
     &  & 1 / 6
   \end{array}\right) = \frac{\text{Scal}}{6} \text{Id} +
   \frac{\text{Scal}}{6} E^-_1 \otimes E^-_1 \]
(and indeed this matrix is traceless w.r.t.\ the pseudo-metric $g$).
One should note that the above expression differs from the Riemannian case, where
\[ \Wd^+ = \text{Scal} \left(\begin{array}{ccc}
     1 / 3 &  & \\
     & - 1 / 6 & \\
     &  & - 1 / 6
   \end{array}\right) = - \frac{\text{Scal}}{6} \text{Id} +
   \frac{\text{Scal}}{3} E_1^+ \otimes E_1^+ . \]
We let the Reader check that in the neutral case, the $\Wd^+$ part is not a
multiple of the scalar curvature, which completes the proof of Theorem~\ref{thm-W-scal2}.


\subsection{The para-K\"ahler case}

The computations are almost identical, but the results differ from the
pseudo-K\"ahler setup, because the para-complex structure $\j $ is now an
anti-isometry: $\RR ( \j X, \j Y)Z = - \RR ( X, Y) Z$. We pick an
orthonormal basis $( e_1, e_{1'}, e_2, e_{2'})$ with 
$e_{1'} = \j e_1$, $e_{2'} = \j e_2$, and $g(e_1) = g ( e_2) = + 1$, 
$g ( e_{1'}) = g ( e_{2'}) = - 1$. The frame
$\mathcal{B}= ( e_1 \wedge e_{1'}, e_1 \wedge e_2, e_1 \wedge e_{2'}, e_{1'}
\wedge e_2, e_{1'} \wedge e_{2'}, e_2 \wedge e_{2'})$ of $\Lambda^2 T\N$ is
also orthonormal w.r.t. the induced metric on $\Lambda^2$, again denoted by
$g$, which has signature $( 2, 4)$: $g ( e_a \wedge e_b) = g ( e_a) g ( e_b) =
- 1$, except for $g ( e_1 \wedge e_2) = g ( e_{1'} \wedge e_{2'}) = + 1$.

An orthonormal eigenbasis for the Hodge operator is the following:
\[ 
\left\{ \begin{array}{l}
     E_1^{\pm} = \frac{\sqrt{2}}{2}  ( e_1 \wedge e_{1'} \mp e_2 \wedge
     e_{2'})\\
     E_2^{\pm} = \frac{\sqrt{2}}{2}  ( e_1 \wedge e_2 \mp e_{1'} \wedge
     e_{2'})\\
     E_3^{\pm} = \frac{\sqrt{2}}{2}  ( e_1 \wedge e_{2'} \mp e_{1'} \wedge
     e_2)
   \end{array} \right. 
\]
where the $E_a^+$ (resp. $E_a^-$) span $\Lambda^+$ (resp. $\Lambda^-$).
(Note the sign differences w.r.t.\ the pseudo-K\"ahler case.)

\medskip

Since $\j $ is anti-isometric and parallel,
\[
\Rm ( \j X, \j Y, Z, T) = - \Rm ( X,Y, Z, T) = \Rm ( X, Y, \j Z, \j T) \, .
\]
Hence the following symmetries of the riemannian curvature operator $\RR$,
expressed in the frame $\mathcal{B}$ (for symmetry reasons and greater
legibility, lower left coefficients are not written in this and the subsequent
matrices):
\[ \begin{array}{|c|c|c|c|c|c|c|}
     \hline
     & e_{1 1'} & e_{12} & e_{1 2'} 
     & e_{1'} \wedge e_{2} & e_{1' 2'} & e_{2 2'}\\
     \hline
     e_{1 1'} & \RR_{11' 11'} & \RR_{11' 12} & \RR_{11' 12'} 
     & \begin{array}{c} \RR_{11' 1' 2} \\ = - \RR_{11' 12'}  \end{array}
     & \begin{array}{c} \RR_{11' 1' 2'} \\ = - \RR_{11' 12}  \end{array} & \RR_{11' 22'}\\
     \hline
     e_{1 2} &  & \RR_{1212} & \RR_{1212'} 
     & \begin{array}{c} \RR_{121' 2} \\ = - \RR_{1212'} \end{array}  
     & \begin{array}{c} \RR_{121' 2'} \\ = - \RR_{1212} \end{array}  & \RR_{1222'}\\
     \hline
     e_{1 2'} &  &  & \RR_{12' 12'} 
     & \begin{array}{c} \RR_{12' 1' 2} \\ = - \RR_{12' 12'} \end{array}  &
     \begin{array}{c} \RR_{12' 1' 2'} \\ = - \RR_{1212'} \end{array}  & \RR_{12' 22'}\\
     \hline
     e_{1' 2} &  &  &  
     & \begin{array}{c} \RR_{1' 21' 2} \\ = \RR_{12' 12'}  \end{array} 
     & \begin{array}{c} \RR_{1' 21' 2'} \\ = \RR_{1212'}  \end{array} 
     & \begin{array}{c} \RR_{1' 222'} \\ = - \RR_{12' 22'} \end{array} 
     \\
     \hline
     e_{1' 2'} &  &  &  &  
     & \begin{array}{c} \RR_{1' 2' 1' 2'} \\ = \RR_{1212}  \end{array} 
     & \begin{array}{c} \RR_{1' 2'22'} \\ = - \RR_{1222'} \end{array} \\
     \hline
     e_{2 2'} &  &  &  &  &  & \RR_{22' 22'}\\
     \hline
   \end{array} \]
(Note again the similarity with the pseudo-K\"ahler case: only a few signs
change.)

\medskip

The Weyl tensor satisfies {\emph{some}} of the $\j $-symmetries of $\Rm$ since
\begin{eqnarray*}
  \ric ( \j X, \j Y) & = & \sum_{i=1}^4 g(e_i) \Rm ( \j X,
  e_i, \j Y, e_i) = \sum_{i=1}^4 g ( e_i) \Rm ( X, \j e_i, Y, \j e_i)\\
  & = & - \sum_{i=1}^4 g ( \j e_i) \Rm ( X, \j e_i, Y, \j e_i) = - \ric ( X,
  Y)
\end{eqnarray*}
since $( \j e_i)$ is also an orthonormal basis. In particular this invariance
implies $r_{1' 1} = r_{11'} = - r_{1' 1}$, so $r_{11'}$ vanishes (and so
does $r_{22'}$). Finally,
\[ 
\frac{\text{Scal}}{2} = r_{11} + r_{22} = - \RR_{11' 11'} + 2 ( \RR_{1212} -
   \RR_{12' 12'}) - \RR_{22' 22'} . 
\]

The Kulkarni--Nomizu product $\ric \kn g$ satisfies
\begin{eqnarray*}
  \ric \kn g ( \j X, Y, Z, T) 
  & = & 
  \ric (\j X, Z) g (Y, T) + \ric (Y, T) g (\j X, Z) 
  \\
  && - \ric (\j X, T) g (Y, Z) - \ric (Y,Z) g (\j X, T)
  \\
  & = & \ric (X, \j Z) g (\j Y, \j T) + \ric (\j Y, \j T) g (X, \j Z) 
  \\
  && - \ric (X, \j T) g (\j Y, \j Z) - \ric (\j Y, \j Z) g (X, \j T)
  \\
  & = & \ric \kn g ( X, \j Y, \j Z, \j T)
\end{eqnarray*}
so
\[ \ric \kn g ( \j X, \j Y, Z, T) = \ric \kn g ( X, \j ^2 Y, \j Z,
   \j T) = \ric \kn g ( X, Y, \j Z, \j T) \]
and the same property holds for $g \kn g$. Hence the following symmetries
(fewer than for $\Rm$) in the coefficients of $\ric \kn g$, $g
\kn g$ and $\Rm$, and therefore $\Wd$:
\[ \begin{array}{|c|c|c|c|c|c|c|}
     \hline
     & e_{1 1'} & e_{1 2} & e_{1 2'} 
     & e_{1' 2} & e_{1' 2'} & e_{2 2'}\\
     \hline
     e_{1 1'} & \Wd_{11' 11'} & \Wd_{11' 12} & \Wd_{11' 12'} 
     & \begin{array}{c} \Wd_{11' 1' 2} = \\ - \Wd_{11' 12'} \end{array} 
     & \begin{array}{c} \Wd_{11' 1' 2'} \\ = - \Wd_{11' 12} \end{array}  & \Wd_{11' 22'}\\
     \hline
     e_{1 2} &  & \Wd_{1212} & \Wd_{1212'} & \Wd_{121' 2} & \Wd_{121' 2'} &
     \Wd_{1222'}\\
     \hline
     e_{1 2'} &  &  & \Wd_{12' 12'} & \Wd_{12' 1' 2} 
     & \begin{array}{c} \Wd_{12' 1' 2'} \\ = \Wd_{121' 2}  \end{array}  & \Wd_{12' 22'}\\
     \hline
     e_{1' 2} &  &  &  & \begin{array}{c} \Wd_{1' 21' 2} \\ = \Wd_{12' 12'} \end{array}  
     & \begin{array}{c} \Wd_{1' 21' 2'} \\ =  \Wd_{1212'}  \end{array} 
     & \begin{array}{c} \Wd_{1' 222'} \\ = - \Wd_{12' 22'} \end{array} \\
     \hline
     e_{1' 2'} &  &  &  &  
     & \begin{array}{c} \Wd_{1' 2' 1' 2'} \\ = \Wd_{1212}  \end{array} 
     & \begin{array}{c} \Wd_{1' 2'22'} \\ = - \Wd_{1222'} \end{array} \\
     \hline
     e_{2 2'} &  &  &  &  &  & \Wd_{22' 22'}\\
     \hline
   \end{array} \]
Let us now express $\Wd$ in the Hodge basis defined earlier, using the above
symmetries (up to a factor $1 / 2$ due to normalization).
\[ \begin{array}{|c|c|c|c|}
     \hline
     & E_1^+ & E_2^+ & E_3^+\\
     \hline
     E_1^+ & \Wd_{11' 11'} + \Wd_{22' 22'} - 2 \Wd_{11' 22'} & 2 ( \Wd_{11' 12} -
     \Wd_{1222'}) & 2 ( \Wd_{11' 12'} - \Wd_{12' 22'})\\
     \hline
     E_2^+ &  & 2 ( \Wd_{1212} - \Wd_{121' 2'}) & 2 ( \Wd_{1212'} - \Wd_{121' 2})\\
     \hline
     E_3^+ &  &  & 2 ( \Wd_{12' 12'} - \Wd_{12' 1' 2})\\
     \hline
     E_1^- &  &  & \\
     \hline
     E_2^- &  &  & \\
     \hline
     E_3^- &  &  & \\
     \hline
   \end{array} \]
\[ \begin{array}{|c|c|c|c|}
     \hline
     & E_1^- & E_2^- & E_3^-\\
     \hline
     E_1^+ & \Wd_{11' 11'} - \Wd_{22' 22'} & 0 & 0\\
     \hline
     E_2^+ & 2 ( \Wd_{11' 12} + \Wd_{1222'}) & 0 & 0\\
     \hline
     E_3^+ & 2 ( \Wd_{11' 12'} + \Wd_{12' 22'}) & 0 & 0\\
     \hline
     E_1^- & \begin{array}{c} \Wd_{11' 11'} + \Wd_{22' 22'} \\ + 2 \Wd_{11' 22'} \end{array} 
     & 0 & 0\\
     \hline
     E_2^- &  & 2 ( \Wd_{1212} + \Wd_{121' 2'}) & 2 ( \Wd_{1212'} + \Wd_{121' 2})\\
     \hline
     E_3^- &  &  & 2 ( \Wd_{12' 12'} + \Wd_{12' 1' 2})\\
     \hline
   \end{array} \]
Only three terms in the off-block-diagonal part are not obviously zero.
\begin{eqnarray*}
  \Wd_{11' 11'} & = & \RR_{11' 11'} - \frac{1}{2}  ( - r_{11} + r_{1' 1'}) -
  \frac{\text{Scal}}{6} = \RR_{11' 11'} + r_{11} - \frac{\text{Scal}}{6} 
\end{eqnarray*}
\[ \Wd_{22' 22'} = \RR_{22' 22'} - \frac{1}{2}  ( - r_{22} + r_{2' 2'}) -
   \frac{\text{Scal}}{6} = \RR_{22' 22'} + r_{22} - \frac{\text{Scal}}{6} \]
but $r_{11} = - \RR_{11' 11'} + \RR_{1212} - \RR_{12' 12'}$ and $r_{22} = \RR_{2121} -
\RR_{21' 21'} - \RR_{22' 22'} = \RR_{1212} - \RR_{12' 12'} - \RR_{22' 22'}$ so that
\[ \Wd_{11' 11'} - \Wd_{22' 22'} = \RR_{11' 11'} - \RR_{22' 22'} + r_{11} - r_{22} = 0
   . \]
Similarly
\[ \Wd_{11' 12} + \Wd_{1222'} = \RR_{11' 12} - \frac{r_{1' 2}}{2} + \RR_{1222'} +
   \frac{r_{12'}}{2} = \RR_{11' 12} + \RR_{1222'} - r_{1' 2} = 0 \]
\[ \Wd_{11' 12'} + \Wd_{12' 22'} = \RR_{11' 12'} - \frac{r_{1' 2'}}{2} + \RR_{12' 22'}
   + \frac{r_{12}}{2} = \RR_{11' 12'} + \RR_{12' 22'} + r_{12} = 0 \]
which proves that $\Wd$ is block-diagonal, i.e.\ commutes with the Hodge
operator.

\medskip

Let us now look more closely at the $\Wd^-$ term
\[ \left(\begin{array}{ccc}
     \Wd_{11' 11'} + \Wd_{22' 22'} + 2 \Wd_{11' 22'} & 0 & 0\\
     & 2 ( \Wd_{1212} + \Wd_{121' 2'}) & 2 ( \Wd_{1212'} + \Wd_{121' 2})\\
     &  & 2 ( \Wd_{12' 12'} + \Wd_{12' 1' 2})
   \end{array}\right) 
\]
\begin{multline*}
  \Wd_{11' 11'} + \Wd_{22' 22'} + 2 \Wd_{11' 22'} 
  \\ 
  =  \RR_{11' 11'} + r_{11} -
  \frac{\text{Scal}}{6} + \RR_{22' 22'} + r_{22} - \frac{\text{Scal}}{6} + 2
  \RR_{11' 22'}
  \\
   =  \RR_{11' 11'} + \RR_{22' 22'} + 2 \RR_{11' 22'} + \frac{\text{Scal}}{2} -
  \frac{\text{Scal}}{3}
  \\
   =  \RR_{11' 11'} + \RR_{22' 22'} + 2 ( - \RR_{1212} + \RR_{12' 12'}) +
  \frac{\text{Scal}}{6} = - \frac{\text{Scal}}{3}
\end{multline*}
where we have used the first Bianchi identity (and the invariance of
$\Rm$)
\[ \RR_{11' 22'} = - \RR_{1' 212'} - \RR_{211' 2'} = \RR_{12' 12'} - \RR_{1212} . \]
\begin{eqnarray*}
  \Wd_{1212} + \Wd_{121' 2'} 
  & = & \RR_{1212} - \frac{r_{22} + r_{11}}{2} +
  \frac{\text{Scal}}{6} + \RR_{121' 2'} 
  \\
  & = & \RR_{1212} - \frac{\text{Scal}}{4} +
  \frac{\text{Scal}}{6} + \RR_{121' 2'} = - \frac{\text{Scal}}{12}
\end{eqnarray*}
\begin{eqnarray*}
  \Wd_{12' 12'} + \Wd_{12' 1' 2} & = & \RR_{12' 12'} + \frac{\text{Scal}}{4} -
  \frac{\text{Scal}}{6} + \RR_{12' 1' 2} = \frac{\text{Scal}}{12}
\end{eqnarray*}
\[ \Wd_{1212'} + \Wd_{121' 2} = \RR_{1212'} - \frac{r_{22'}}{2} + \RR_{121' 2} -
   \frac{r_{11'}}{2} = 0 . \]
Finally,

\[ 
\Wd^- = \text{Scal} \left(\begin{array}{ccc}
     - 1 / 3 &  & \\
     & - 1 / 6 & \\
     &  & 1 / 6
   \end{array}\right) \]
vanishes if and only if $\text{Scal} = 0$. (The Reader will check that this matrix is indeed traceless
w.r.t.\ the pseudo-metric $g$.)


\end{document}